\def\quintetspic{\begin{tikzpicture}
\tikzstyle{knode}=[circle,draw=black,thick,inner sep=0pt,text width = 15 pt, align = center]

\node (1) at (-5,2) [knode] {\small $g_1$};
\node (2) at (-3,2) [knode] {\small $h_1$};
\node (3) at (-5,0) [knode] {\small $g_2$};
\node (4) at (-3,0) [knode] {\small $h_2$};
\node (5) at (-4,-1) [knode] {\small $s$};

\node (6) at (-1,2) [knode] {\small $g_1$};
\node (7) at (1,2) [knode] {\small $h_1$};
\node (8) at (-1,0) [knode] {\small $g_2$};
\node (9) at (1,0) [knode] {\small $h_2$};
\node (10) at (0,-1) [knode] {\small $s$};

\node (11) at (3,2) [knode] {\small $g_1$};
\node (12) at (5,2) [knode] {\small $h_1$};
\node (13) at (3,0) [knode] {\small $h_2$};
\node (14) at (5,0) [knode] {\small $g_2$};
\node (15) at (4,-1) [knode] {\small $s$};

\foreach \from/\to in {1/2,1/3,1/4,2/3,2/4,3/4,5/1,5/3,6/7,6/8,7/9,8/9,10/8,10/6,11/12,11/13,12/14,13/14,15/11,15/14}
\draw (\from)--(\to);
\end{tikzpicture}}
\def\tenspic{\begin{tikzpicture}[scale=.6]
\tikzstyle{knode}=[circle,draw=black,thick,inner sep=0pt,text width = 14 pt, align = center]

\node (g0) at (0:-5cm) [knode] {\small 0};
\node (g1) at ($(g0)+(0:2cm)$) [knode] {\small 1};
\node (g2) at ($(g0)+(180:2cm)$) [knode] {\small 2};
\node (g3) at ($(g0)+(120:2cm)$) [knode] {\small 3};
\node (g4) at ($(g3)+(180:2cm)$) [knode] {\small 4};
\node (g5) at ($(g0)+(60:2cm)$) [knode] {\small 5};
\node (g9) at ($(g0)+(180:4cm)$) [knode] {\small 9};
\node (g6) at ($(g2)+(-60:2cm)$) [knode] {\small 6};
\node (g7) at ($(g1)+(60:2cm)$) [knode] {\small 7};
\node (g8) at ($(g5)+(90:2cm)$) [knode] {\small 8};

\foreach \from/\to in {g0/g1,g0/g5,g0/g3,g0/g2,g1/g5,g1/g7,g2/g3,g2/g4,g2/g9,g4/g9,g5/g8,g6/g9}
\draw (\from)--(\to);

\foreach \from/\to in {g5/g7,g3/g5,g3/g4,g2/g6,g3/g6}
\draw (\from)--(\to);

\node (h0) at (0:5cm) [knode] {\small0};
\node (h1) at ($(h0)+(0:2cm)$) [knode] {\small1};
\node (h2) at ($(h0)+(180:2cm)$) [knode] {\small 2};
\node (h3) at ($(h0)+(120:2cm)$) [knode] {\small 3};
\node (h4) at ($(h3)+(180:2cm)$) [knode] {\small4};
\node (h5) at ($(h0)+(60:2cm)$) [knode] {\small 5};
\node (h9) at ($(h0)+(180:4cm)$) [knode] {\small 9};
\node (h6) at ($(h2)+(-60:2cm)$) [knode] {\small 6};
\node (h7) at ($(h1)+(60:2cm)$) [knode] {\small 7};
\node (h8) at ($(h5)+(90:2cm)$) [knode] {\small 8};

\foreach \from/\to in {h0/h1,h0/h5,h0/h3,h0/h2,h1/h5,h1/h7,h2/h3,h2/h4,h2/h9,h4/h9,h5/h8,h6/h9}
\draw (\from)--(\to);

\foreach \from/\to in {h7/h8,h3/h8,h2/h5,h2/h6}
\draw (\from)--(\to);

\end{tikzpicture}
}
\def\sevens{\begin{tabular}{c|c|c}
%SET 0
\begin{tikzpicture}[scale=0.6]
\tikzstyle{knode}=[circle,draw=black,thick,inner sep=2pt]

\node (lc) at (-1.5,0) {};
\node (rc) at (1.5,0) {};

\node (l1) at ($(lc)+(-1,3)$) [knode] {};
\node (l2) at ($(lc)+(1,3)$) [knode] {};
\node (l3) at ($(lc)+(-1,1)$) [knode] {};
\node (l4) at ($(lc)+(1,1)$) [knode] {};
\node (l5) at ($(lc)+(-1,-1)$) [knode] {};
\node (l6) at ($(lc)+(1,-1)$) [knode] {};
\node (l7) at ($(lc)+(0,-2.5)$) [knode] {};

\node (r1) at ($(rc)+(-1,3)$) [knode] {};
\node (r2) at ($(rc)+(1,3)$) [knode] {};
\node (r3) at ($(rc)+(-1,1)$) [knode] {};
\node (r4) at ($(rc)+(1,1)$) [knode] {};
\node (r5) at ($(rc)+(-1,-1)$) [knode] {};
\node (r6) at ($(rc)+(1,-1)$) [knode] {};
\node (r7) at ($(rc)+(0,-2.5)$) [knode] {};

\foreach \from/\to in {1/2,1/3,1/6,1/7,2/3,2/7,2/4,3/4,3/6,3/5,4/5,4/6,5/6}
\draw (l\from)--(l\to) 
(r\from)--(r\to); 

\draw (l7)--(l3)
(l7)--(l5)
(r4)--(r7)
(r6)--(r7);
\end{tikzpicture}

&

%SET3
\begin{tikzpicture}[scale=0.6]
\tikzstyle{knode}=[circle,draw=black,thick,inner sep=2pt]

\node (t) at (0,3.2) {};

\node (lc) at (-1.5,0) {};
\node (rc) at (1.5,0) {};

\node (l1) at ($(lc)+(-1,3)$) [knode] {};
\node (l2) at ($(lc)+(1,3)$) [knode] {};
\node (l3) at ($(lc)+(-1,1)$) [knode] {};
\node (l4) at ($(lc)+(1,1)$) [knode] {};
\node (l5) at ($(lc)+(-1,-1)$) [knode] {};
\node (l6) at ($(lc)+(1,-1)$) [knode] {};
\node (l7) at ($(lc)+(0,-2.5)$) [knode] {};

\node (r1) at ($(rc)+(-1,3)$) [knode] {};
\node (r2) at ($(rc)+(1,3)$) [knode] {};
\node (r3) at ($(rc)+(-1,1)$) [knode] {};
\node (r4) at ($(rc)+(1,1)$) [knode] {};
\node (r5) at ($(rc)+(-1,-1)$) [knode] {};
\node (r6) at ($(rc)+(1,-1)$) [knode] {};
\node (r7) at ($(rc)+(0,-2.5)$) [knode] {};

\foreach \from/\to in {1/3,1/4,2/5,2/4,3/4,3/5,3/6,4/5,4/6,5/6}
\draw (l\from)--(l\to) 
(r\from)--(r\to); 

\draw (l7)--(l3)
(l7)--(l5)
(r4)--(r7)
(r6)--(r7);

\end{tikzpicture}

&
%SET 7
\begin{tikzpicture}[scale=0.6]
\tikzstyle{knode}=[circle,draw=black,thick,inner sep=2pt]

\node (lc) at (-1.5,0) {};
\node (rc) at (1.5,0) {};

\node (l1) at ($(lc)+(-1,3)$) [knode] {};
\node (l2) at ($(lc)+(1,3)$) [knode] {};
\node (l3) at ($(lc)+(-1,1)$) [knode] {};
\node (l4) at ($(lc)+(1,1)$) [knode] {};
\node (l5) at ($(lc)+(-1,-1)$) [knode] {};
\node (l6) at ($(lc)+(1,-1)$) [knode] {};
\node (l7) at ($(lc)+(0,-2.5)$) [knode] {};

\node (r1) at ($(rc)+(-1,3)$) [knode] {};
\node (r2) at ($(rc)+(1,3)$) [knode] {};
\node (r3) at ($(rc)+(-1,1)$) [knode] {};
\node (r4) at ($(rc)+(1,1)$) [knode] {};
\node (r5) at ($(rc)+(-1,-1)$) [knode] {};
\node (r6) at ($(rc)+(1,-1)$) [knode] {};
\node (r7) at ($(rc)+(0,-2.5)$) [knode] {};

%linear edges
\foreach \from/\to in {1/3,1/6,1/7,2/3,2/7,2/4,3/4,3/5,3/6,4/5,4/6,5/6}
\draw (l\from)--(l\to) 
(r\from)--(r\to);

%same side
\draw (l7)--(l3)
(l7)--(l5)
(r4)--(r7)
(r6)--(r7);

\end{tikzpicture}
\\

\hline

%SET 9
\begin{tikzpicture}[scale=0.6]
\tikzstyle{knode}=[circle,draw=black,thick,inner sep=2pt]

\node (t) at (0,3.2) {};

\node (lc) at (-1.5,0) {};
\node (rc) at (1.5,0) {};

\node (l1) at ($(lc)+(-1,3)$) [knode] {};
\node (l2) at ($(lc)+(1,3)$) [knode] {};
\node (l3) at ($(lc)+(-1,1)$) [knode] {};
\node (l4) at ($(lc)+(1,1)$) [knode] {};
\node (l5) at ($(lc)+(-1,-1)$) [knode] {};
\node (l6) at ($(lc)+(1,-1)$) [knode] {};
\node (l7) at ($(lc)+(0,-2.5)$) [knode] {};

\node (r1) at ($(rc)+(-1,3)$) [knode] {};
\node (r2) at ($(rc)+(1,3)$) [knode] {};
\node (r3) at ($(rc)+(-1,1)$) [knode] {};
\node (r4) at ($(rc)+(1,1)$) [knode] {};
\node (r5) at ($(rc)+(-1,-1)$) [knode] {};
\node (r6) at ($(rc)+(1,-1)$) [knode] {};
\node (r7) at ($(rc)+(0,-2.5)$) [knode] {};

%linear edges
\foreach \from/\to in {1/3,1/4,2/5,2/4,1/7,3/4,3/5,3/6,4/5,4/6,5/6}
\draw (l\from)--(l\to) 
(r\from)--(r\to);

%same side
\draw (l7)--(l3)
(l7)--(l5)
(r4)--(r7)
(r6)--(r7);

\end{tikzpicture}

&

%SET 2
\begin{tikzpicture}[scale=0.6]
\tikzstyle{knode}=[circle,draw=black,thick,inner sep=2pt]

\node (lc) at (-1.5,0) {};
\node (rc) at (1.5,0) {};

\node (l1) at ($(lc)+(-1,3)$) [knode] {};
\node (l2) at ($(lc)+(1,3)$) [knode] {};
\node (l3) at ($(lc)+(-1,1)$) [knode] {};
\node (l4) at ($(lc)+(1,1)$) [knode] {};
\node (l5) at ($(lc)+(-1,-1)$) [knode] {};
\node (l6) at ($(lc)+(1,-1)$) [knode] {};
\node (l7) at ($(lc)+(0,-2.5)$) [knode] {};

\node (r1) at ($(rc)+(-1,3)$) [knode] {};
\node (r2) at ($(rc)+(1,3)$) [knode] {};
\node (r3) at ($(rc)+(-1,1)$) [knode] {};
\node (r4) at ($(rc)+(1,1)$) [knode] {};
\node (r5) at ($(rc)+(-1,-1)$) [knode] {};
\node (r6) at ($(rc)+(1,-1)$) [knode] {};
\node (r7) at ($(rc)+(0,-2.5)$) [knode] {};

%linear edges
\foreach \from/\to in {1/2,1/3,1/6,2/3,2/4,3/4,3/5,4/6,5/6}
\draw (l\from)--(l\to) 
(r\from)--(r\to);

%same side
\draw (l7)--(l3)
(l7)--(l5)
(r4)--(r7)
(r6)--(r7);

\end{tikzpicture}

&

%SET 5
\begin{tikzpicture}[scale=0.6]
\tikzstyle{knode}=[circle,draw=black,thick,inner sep=2pt]

\node (lc) at (-1.5,0) {};
\node (rc) at (1.5,0) {};

\node (l1) at ($(lc)+(-1,3)$) [knode] {};
\node (l2) at ($(lc)+(1,3)$) [knode] {};
\node (l3) at ($(lc)+(-1,1)$) [knode] {};
\node (l4) at ($(lc)+(1,1)$) [knode] {};
\node (l5) at ($(lc)+(-1,-1)$) [knode] {};
\node (l6) at ($(lc)+(1,-1)$) [knode] {};
\node (l7) at ($(lc)+(0,-2.5)$) [knode] {};

\node (r1) at ($(rc)+(-1,3)$) [knode] {};
\node (r2) at ($(rc)+(1,3)$) [knode] {};
\node (r3) at ($(rc)+(-1,1)$) [knode] {};
\node (r4) at ($(rc)+(1,1)$) [knode] {};
\node (r5) at ($(rc)+(-1,-1)$) [knode] {};
\node (r6) at ($(rc)+(1,-1)$) [knode] {};
\node (r7) at ($(rc)+(0,-2.5)$) [knode] {};

%linear edges
\foreach \from/\to in {1/4,2/3,2/4,3/4,3/5,4/6,5/6}
\draw (l\from)--(l\to) 
(r\from)--(r\to);

%curved edges
\draw (l1) to [bend right] (l5);
\draw (r1) to [bend right] (r5);

%same side
\draw (l7)--(l3)
(l7)--(l5)
(r4)--(r7)
(r6)--(r7);

\end{tikzpicture}

\\
\hline 

%SET 6
\begin{tikzpicture}[scale=0.6]
\tikzstyle{knode}=[circle,draw=black,thick,inner sep=2pt]

\node (t) at (0,3.2) {};

\node (lc) at (-1.5,0) {};
\node (rc) at (1.5,0) {};

\node (l1) at ($(lc)+(-1,3)$) [knode] {};
\node (l2) at ($(lc)+(1,3)$) [knode] {};
\node (l3) at ($(lc)+(-1,1)$) [knode] {};
\node (l4) at ($(lc)+(1,1)$) [knode] {};
\node (l5) at ($(lc)+(-1,-1)$) [knode] {};
\node (l6) at ($(lc)+(1,-1)$) [knode] {};
\node (l7) at ($(lc)+(0,-2.5)$) [knode] {};

\node (r1) at ($(rc)+(-1,3)$) [knode] {};
\node (r2) at ($(rc)+(1,3)$) [knode] {};
\node (r3) at ($(rc)+(-1,1)$) [knode] {};
\node (r4) at ($(rc)+(1,1)$) [knode] {};
\node (r5) at ($(rc)+(-1,-1)$) [knode] {};
\node (r6) at ($(rc)+(1,-1)$) [knode] {};
\node (r7) at ($(rc)+(0,-2.5)$) [knode] {};

%linear edges
\foreach \from/\to in {1/6,1/3,2/3,2/4,3/4,3/5,4/6,5/6,2/7}
\draw (l\from)--(l\to) 
(r\from)--(r\to);

%same side
\draw (l7)--(l3)
(l7)--(l5)
(r4)--(r7)
(r6)--(r7);

\end{tikzpicture}

&

%SET 10
\begin{tikzpicture}[scale=0.6]
\tikzstyle{knode}=[circle,draw=black,thick,inner sep=2pt]

\node (lc) at (-1.5,0) {};
\node (rc) at (1.5,0) {};

\node (l1) at ($(lc)+(-1,3)$) [knode] {};
\node (l2) at ($(lc)+(1,3)$) [knode] {};
\node (l3) at ($(lc)+(-1,1)$) [knode] {};
\node (l4) at ($(lc)+(1,1)$) [knode] {};
\node (l5) at ($(lc)+(-1,-1)$) [knode] {};
\node (l6) at ($(lc)+(1,-1)$) [knode] {};
\node (l7) at ($(lc)+(0,-2.5)$) [knode] {};

\node (r1) at ($(rc)+(-1,3)$) [knode] {};
\node (r2) at ($(rc)+(1,3)$) [knode] {};
\node (r3) at ($(rc)+(-1,1)$) [knode] {};
\node (r4) at ($(rc)+(1,1)$) [knode] {};
\node (r5) at ($(rc)+(-1,-1)$) [knode] {};
\node (r6) at ($(rc)+(1,-1)$) [knode] {};
\node (r7) at ($(rc)+(0,-2.5)$) [knode] {};

%linear edges
\foreach \from/\to in {1/2,1/3,1/6,1/7,2/3,2/4,2/7,3/4,3/5,4/6,5/6}
\draw (l\from)--(l\to) 
(r\from)--(r\to);

%same side
\draw (l7)--(l3)
(l7)--(l5)
(r4)--(r7)
(r6)--(r7);

\end{tikzpicture}

&

%SET 8
\begin{tikzpicture}[scale=0.6]
\tikzstyle{knode}=[circle,draw=black,thick,inner sep=2pt]

\node (lc) at (-1.5,0) {};
\node (rc) at (1.5,0) {};

\node (l1) at ($(lc)+(-1,3)$) [knode] {};
\node (l2) at ($(lc)+(1,3)$) [knode] {};
\node (l3) at ($(lc)+(-1,1)$) [knode] {};
\node (l4) at ($(lc)+(1,1)$) [knode] {};
\node (l5) at ($(lc)+(-1,-1)$) [knode] {};
\node (l6) at ($(lc)+(1,-1)$) [knode] {};
\node (l7) at ($(lc)+(0,-2.5)$) [knode] {};

\node (r1) at ($(rc)+(-1,3)$) [knode] {};
\node (r2) at ($(rc)+(1,3)$) [knode] {};
\node (r3) at ($(rc)+(-1,1)$) [knode] {};
\node (r4) at ($(rc)+(1,1)$) [knode] {};
\node (r5) at ($(rc)+(-1,-1)$) [knode] {};
\node (r6) at ($(rc)+(1,-1)$) [knode] {};
\node (r7) at ($(rc)+(0,-2.5)$) [knode] {};

%linear edges
\foreach \from/\to in {1/2,1/3,1/4,2/7,3/4,3/5,4/6,5/6}
\draw (l\from)--(l\to) 
(r\from)--(r\to);

%curved edges
\draw (l2) to [bend left] (l6);
\draw (r2) to [bend left] (r6);

%same side
\draw (l7)--(l3)
(l7)--(l5)
(r4)--(r7)
(r6)--(r7);

\end{tikzpicture}

\\
\hline
%SET 1
\begin{tikzpicture}[scale=0.6]
\tikzstyle{knode}=[circle,draw=black,thick,inner sep=2pt]

\node (t) at (0,3.2) {};

\node (lc) at (-1.5,0) {};
\node (rc) at (1.5,0) {};

\node (l1) at ($(lc)+(-1,3)$) [knode] {};
\node (l2) at ($(lc)+(1,3)$) [knode] {};
\node (l3) at ($(lc)+(-1,1)$) [knode] {};
\node (l4) at ($(lc)+(1,1)$) [knode] {};
\node (l5) at ($(lc)+(-1,-1)$) [knode] {};
\node (l6) at ($(lc)+(1,-1)$) [knode] {};
\node (l7) at ($(lc)+(0,-2.5)$) [knode] {};

\node (r1) at ($(rc)+(-1,3)$) [knode] {};
\node (r2) at ($(rc)+(1,3)$) [knode] {};
\node (r3) at ($(rc)+(-1,1)$) [knode] {};
\node (r4) at ($(rc)+(1,1)$) [knode] {};
\node (r5) at ($(rc)+(-1,-1)$) [knode] {};
\node (r6) at ($(rc)+(1,-1)$) [knode] {};
\node (r7) at ($(rc)+(0,-2.5)$) [knode] {};

%linear edges
\foreach \from/\to in {1/3,1/2,2/3,2/4,2/7,3/4,3/5,4/6,5/6}
\draw (l\from)--(l\to) 
(r\from)--(r\to);

%curved edges
\draw (l1) to [bend right] (l5);
\draw (r1) to [bend right] (r5);

%opposite side
\draw (l3)--(l7)
(l6)--(l7)
(r4)--(r7)
(r5)--(r7);

\end{tikzpicture}

&
%SET 4
\begin{tikzpicture}[scale=0.6]
\tikzstyle{knode}=[circle,draw=black,thick,inner sep=2pt]

\node (lc) at (-1.5,0) {};
\node (rc) at (1.5,0) {};

\node (l1) at ($(lc)+(-1,3)$) [knode] {};
\node (l2) at ($(lc)+(1,3)$) [knode] {};
\node (l3) at ($(lc)+(-1,1)$) [knode] {};
\node (l4) at ($(lc)+(1,1)$) [knode] {};
\node (l5) at ($(lc)+(-1,-1)$) [knode] {};
\node (l6) at ($(lc)+(1,-1)$) [knode] {};
\node (l7) at ($(lc)+(0,-2.5)$) [knode] {};

\node (r1) at ($(rc)+(-1,3)$) [knode] {};
\node (r2) at ($(rc)+(1,3)$) [knode] {};
\node (r3) at ($(rc)+(-1,1)$) [knode] {};
\node (r4) at ($(rc)+(1,1)$) [knode] {};
\node (r5) at ($(rc)+(-1,-1)$) [knode] {};
\node (r6) at ($(rc)+(1,-1)$) [knode] {};
\node (r7) at ($(rc)+(0,-2.5)$) [knode] {};

%linear edges
\foreach \from/\to in {1/2,1/3,2/3,2/4,3/4,2/7,3/5,4/6,5/6}
\draw (l\from)--(l\to) 
(r\from)--(r\to);

%opposite side
\draw (l3)--(l7)
(l6)--(l7)
(r4)--(r7)
(r5)--(r7);

\end{tikzpicture}

\end{tabular}}
\def\othertens{\begin{tikzpicture}[scale=0.8]
\tikzstyle{knode}=[circle,draw=black,thick,inner sep=3pt]

\node (ltc) at (-2.5,0) {};
\node (rtc) at (2.5,0) {};

\node (l0) at ($(ltc)+(11.79:2cm)$) [knode] {};
\node (l1) at ($(ltc)+(63.93:2cm)$) [knode] {};
\node (l2) at ($(ltc)+(116.07:2cm)$) [knode] {};
\node (l3) at ($(ltc)+(168.21:2cm)$) [knode] {};
\node (l4) at ($(ltc)+(220.35:2cm)$) [knode] {};
\node (l5) at ($(ltc)+(272.49:2cm)$) [knode] {};
\node (l6) at ($(ltc)+(324.63:2cm)$) [knode] {};

\node (l7) at ($(ltc)+(220.35:3cm)$) [knode] {};
\node (l8) at ($(ltc)+(272.49:3cm)$) [knode] {};
\node (l9) at ($(ltc)+(324.63:3cm)$) [knode] {};

  \foreach \from/\to in {l0/l2,l0/l3,l0/l4,l0/l5,l0/l6,l1/l2,l1/l3,l1/l4,l1/l5,l1/l6,l2/l3,l2/l4,l2/l5,l2/l6,l3/l4,l3/l5,l3/l6,l4/l5,l4/l6,l5/l6,l7/l4,l8/l5,l9/l6} 
  \draw (\from)--(\to); 

\node (r0) at ($(rtc)+(0:2cm)$) [knode] {};
\node (r1) at ($(rtc)+(45:2cm)$) [knode] {};
\node (r2) at ($(rtc)+(90:2cm)$) [knode] {};
\node (r3) at ($(rtc)+(135:2cm)$) [knode] {};
\node (r4) at ($(rtc)+(180:2cm)$) [knode] {};
\node (r5) at ($(rtc)+(225:2cm)$) [knode] {};
\node (r6) at ($(rtc)+(270:2cm)$) [knode] {};
\node (r7) at ($(rtc)+(315:2cm)$) [knode] {};
\node (r8) at ($(rtc)+(0:0cm)$) [knode] {};

\node (r9) at ($(rtc)+(247.5:3cm)$) [knode] {};

  \foreach \from/\to in {r0/r1,r1/r2,r2/r3,r3/r4,r4/r5,r5/r6,r6/r7,r7/r0,r0/r8,r1/r8,r2/r8,r3/r8,r4/r8,r5/r8,r6/r8,r7/r8,r8/r9} 
  \draw (\from)--(\to);

\end{tikzpicture}}
\def\othereights{\begin{tikzpicture}[scale=0.8]
\tikzstyle{knode}=[circle,draw=black,thick,inner sep=3pt]

\node (lc) at (-2.5,0) {};
\node (rc) at (2.5,0) {};

\node (l0) at ($(lc)+(-2,1)$) [knode] {};
\node (l1) at ($(lc)+(-2,0)$) [knode] {};
\node (l2) at ($(lc)+(-2,-1)$) [knode] {};
\node (l3) at ($(lc)+(-1,0)$) [knode] {};
\node (l4) at ($(lc)+(0,1)$) [knode] {};
\node (l5) at ($(lc)+(0,-1)$) [knode] {};
\node (l6) at ($(lc)+(1,0)$) [knode] {};
\node (l7) at ($(lc)+(2,0)$) [knode] {};

\foreach \from/\to in {l0/l3,l1/l3,l2/l3,l3/l4,l3/l5,l4/l6,l5/l6,l6/l7}
\draw (\from)--(\to);

\node (r0) at ($(rc)+(-2,1)$) [knode] {};
\node (r1) at ($(rc)+(-2,-1)$) [knode] {};
\node (r2) at ($(rc)+(-1,0)$) [knode] {};
\node (r3) at ($(rc)+(0,0)$) [knode] {};
\node (r4) at ($(rc)+(1,1)$) [knode] {};
\node (r5) at ($(rc)+(1,0)$) [knode] {};
\node (r6) at ($(rc)+(1,-1)$) [knode] {};
\node (r7) at ($(rc)+(2,0)$) [knode] {};

\foreach \from/\to in {r0/r2,r1/r2,r2/r3,r3/r4,r3/r5,r3/r6,r4/r5,r5/r6,r4/r7,r5/r7,r6/r7}
\draw (\from)--(\to);
\end{tikzpicture}}
\renewcommand{\phi}{\varphi}
\theoremstyle{plain}
\newtheorem{theorem}{Theorem}
\newtheorem{corollary}{Corollary}
\theoremstyle{definition}
\begin{document}
\title{A construction of distance cospectral graphs}
\author{Kristin Heysse\thanks{Dept.\ of Mathematics, Iowa State University, Ames, IA 50011, USA\newline ({\tt keheysse@iastate.edu})}}

\maketitle

% \author{Steve Butler\fnref{NSA}}
% \ead{butler@iastate.edu}
% \address{Dept.\ of Mathematics, Iowa State University, Ames, IA 50011, USA}
% \author{Kristin Heysse\corref{cor1}}
% \ead{keheysse@iastate.edu}
% \address{Dept.\ of Mathematics, Iowa State University, Ames, IA 50011, USA}

% \cortext[cor1]{Corresponding author}
% \fntext[NSA]{Partially supported by an NSA Young Investigator Grant.}

\begin{abstract}
The distance matrix of a connected graph is the symmetric matrix with columns and rows indexed by the vertices and entries that are the pairwise distances between the corresponding vertices. We give a construction for graphs which differ in their edge counts yet are cospectral with respect to the distance matrix. Further, we identify a subgraph switching behavior which constructs additional distance cospectral graphs. The proofs for both constructions rely on a perturbation of (most of) the distance eigenvectors of one graph to yield the distance eigenvectors of the other.
\end{abstract}

% \begin{keyword}
% cospectral graphs \sep normalized Laplacian \sep toggling \sep characteristic polynomial
% \end{keyword}

%\maketitle

\section{Introduction}\label{sec:introduction}
Spectral graph theory explores the relationship between a graph and the eigenvalues (i.e., spectrum) of a matrix associated with that graph. There are a handful of common ways to associate a matrix to a graph, and the spectrum of each matrix holds a variety of information about the graph (see \cite{BH}). However, each matrix also has limitations in what information its spectrum can contain. This is seen in the existence of \emph{cospectral graphs}, or graphs that are fundamentally different yet yield the same spectrum for a particular matrix. 

By exploring cospectral graphs, we further our understanding of the limitations of each type of matrix. One of the most well-known constructions of cospectral graphs for the adjacency matrix is Godsil-McKay switching. This is done by defining specific subsets of the vertices of a particular graph and constructing a cospectral mate by exchanging edges and non-edges between these subsets. Godsil and McKay \cite{GM} prove the adjacency matrices of two graphs related by this edge switching are similar, and therefore the graphs are cospectral. 

In this paper, we consider cospectral graphs for the \emph{distance matrix}. The distance matrix $D^{(G)}=\left[d_{ij}^{(G)}\right]$ of a connected graph $G=(V(G),E(G))$ is a symmetric matrix such that $d_{ij}^{(G)}$ is the distance, or length of the shortest path, between vertices $i$ and $j$. Its multiset of eigenvalues is the \emph{distance spectrum} of $G$ and two graphs are considered to be distance cospectral if their distance spectra are the same. There has been extensive work done on the distance spectra of graphs (see \cite{survey} for a survey of recent results). 

However, relatively little is known in regard to distance cospectral pairs. McKay \cite{MK} gives a construction for distance cospectral trees by considering any rooted tree and identifying the root with the root of one of two particular trees. Further, he proves the complement graphs of trees constructed in this fashion are also distance cospectral.  Both proofs rely on manipulation of the distance characteristic polynomial. This is the only known distance cospectral graph construction in the literature, and we note that pairs constructed in this manner must contain the same number of edges. In particular, prior to this paper it was not known whether a family could be constructed where distance cospectral pairs could have differing numbers of edges.  

In this paper, we give a construction for distance cospectral graphs with differing numbers of edges in Section~\ref{sec:diff}, and in Section~\ref{sec:switch} we describe a local edge switching behavior which produces more distance cospectral graphs. While these distance switching pairs do not differ in number of edges, they do account for all distance cospectral pairs on seven vertices (see Figure~\ref{sevens}). Finally, in Section~\ref{sec:conc}, we consider further questions of interest for the distance matrix. We complete the introduction with an elementary discussion of graph identification, a process which will be used in subsequent sections.

\subsection{Graph Identification}
\label{ssec:GI}
Throughout our constructions, we will frequently make use of \emph{graph identification}, therefore we define it here and state some observations about distances between vertices in graphs formed in this way.  Let $G,K$ be graphs and let $u \in V(G)$, $v \in V(K)$.  We construct the graph $GK(u,v)$ by identifying the vertices $u$ and $v$ into a new vertex $uv$ in the graph $G\cup K$. When clear context allows, we will denote this graph $GK$. 

Consider calculating the distance between two vertices $x,y$ of $GK$. We can easily do this by considering if $x$ and $y$ are in the $G$ portion of $GK$ or the $K$ portion of $GK$. 
\begin{itemize}
\item If $x,y$ are both in the $G$ portion, $d^{(GK)}_{xy}=d^{(G)}_{xy}$.
\item If $x,y$ are both in the $K$ portion, $d^{(GK)}_{xy}=d^{(K)}_{xy}$.
\item If $x$ is in the $G$ portion and $y$ is in the $K$ portion, $d^{(GK)}_{xy}=d^{(G)}_{xu}+d^{(K)}_{vy}$.
\end{itemize}
These claims can be verified by noticing that a shortest path between vertices in the same portion will be fully contained in that portion. Further, if two vertices are not in the same portion, any path between them must include the vertex $uv$. 

\section{Distance cospectral graphs with differing numbers of edges}
\label{sec:diff}
Consider the two graphs $G$ and $H$ are shown below, each with vertices labeled zero through nine. 
\begin{figure}[h]
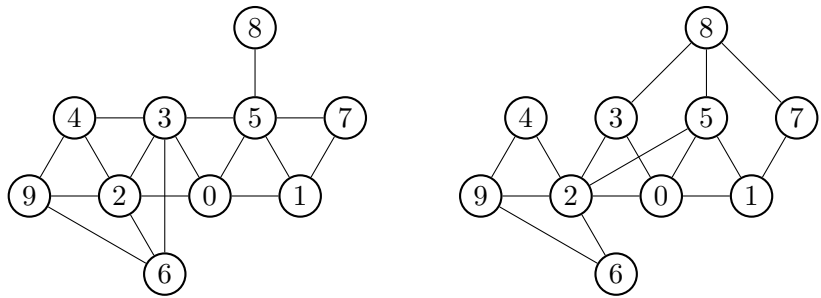

\centering
\tenspic
\caption{Graphs $G$ (left) and $H$ (right).}
\label{GH}
\end{figure}

We immediately note $G$ has 17 edges and $H$ has 16 edges. For future reference, we give the distance matrices of both graphs below.
\[D^{(G)} =\left(\begin{array}{rrrrrrrrrr}
0 & 1 & 1 & 1 & 2 & 1 & 2 & 2 & 2 &
2 \\
1 & 0 & 2 & 2 & 3 & 1 & 3 & 1 & 2 &3 \\
1 & 2 & 0 & 1 & 1 & 2 & 1 & 3 & 3 &1 \\
1 & 2 & 1 & 0 & 1 & 1 & 1 & 2 & 2 &2 \\
2 & 3 & 1 & 1 & 0 & 2 & 2 & 3 & 3 &1 \\
1 & 1 & 2 & 1 & 2 & 0 & 2 & 1 & 1 &3 \\
2 & 3 & 1 & 1 & 2 & 2 & 0 & 3 & 3 &1 \\
2 & 1 & 3 & 2 & 3 & 1 & 3 & 0 & 2 &4 \\
2 & 2 & 3 & 2 & 3 & 1 & 3 & 2 & 0 &4 \\
2 & 3 & 1 & 2 & 1 & 3 & 1 & 4 & 4 &0
\end{array}\right) D^{(H)}=\left(\begin{array}{rrrrrrrrrr}
0 & 1 & 1 & 1 & 2 & 1 & 2 & 2 & 2 &2 \\
1 & 0 & 2 & 2 & 3 & 1 & 3 & 1 & 2 &3 \\
1 & 2 & 0 & 1 & 1 & 1 & 1 & 3 & 2 &1 \\
1 & 2 & 1 & 0 & 2 & 2 & 2 & 2 & 1 &2 \\
2 & 3 & 1 & 2 & 0 & 2 & 2 & 4 & 3 &1 \\
1 & 1 & 1 & 2 & 2 & 0 & 2 & 2 & 1 &2 \\
2 & 3 & 1 & 2 & 2 & 2 & 0 & 4 & 3 &1 \\
2 & 1 & 3 & 2 & 4 & 2 & 4 & 0 & 1 &4 \\
2 & 2 & 2 & 1 & 3 & 1 & 3 & 1 & 0 &3 \\
2 & 3 & 1 & 2 & 1 & 2 & 1 & 4 & 3 &0
\end{array}\right)\]

\begin{theorem} For any graph $K$ and any vertex $v \in V(K)$, and for $u \in \{0,1\}$, the graphs $GK(u,v)$ and $HK(u,v)$ are distance cospectral. 
\label{thrm:tens}
\end{theorem}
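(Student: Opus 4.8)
The plan is to exhibit an explicit orthogonal change of basis (or at least an explicit block-structured similarity) carrying $D^{(GK)}$ to $D^{(HK)}$, built from the distance eigenvectors of $G$ and $H$ together with the structure coming from the graph identification. First I would record, using the observations of Section~\ref{ssec:GI}, the block form of the distance matrix of a graph-identification $FK(u,v)$: if we order the vertices as (the $F$-vertices other than $u$), then $uv$, then (the $K$-vertices other than $v$), the matrix $D^{(FK)}$ is determined by $D^{(F)}$, $D^{(K)}$, and the two ``distance-to-basepoint'' vectors (the $u$-row of $D^{(F)}$ and the $v$-row of $D^{(K)}$) via a rank-one-type coupling: the $F$–$K$ off-diagonal block is the outer sum $\mathbf{d}^{(F)}_u \mathbf{1}^\top + \mathbf{1}\,(\mathbf{d}^{(K)}_v)^\top$. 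The key point is that the $K$-part of this description depends on $K$ only through $D^{(K)}$ and the vector $\mathbf{d}^{(K)}_v$, which are identical for $GK(u,v)$ and $HK(u,v)$ since only the $G$/$H$ side changes.

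So it suffices to produce a single orthogonal matrix $P$, acting only on the ten $G$/$H$ coordinates (and fixed elsewhere — or rather, an orthogonal $Q$ on the full vertex set that is block-diagonal, equal to $P$ on the $G$-block and the identity on the $K$-block, but we must be careful because the basepoint vertex $uv$ is shared), such that conjugation by $Q$ fixes the $K$-block, fixes or appropriately transforms the coupling vector, and sends $D^{(G)}$ to $D^{(H)}$. Concretely I expect $P$ to be built as $P = \sum_i \mathbf{y}_i \mathbf{x}_i^\top$ where $\mathbf{x}_i$ ranges over an orthonormal eigenbasis of $D^{(G)}$ and $\mathbf{y}_i$ over one of $D^{(H)}$ for the matched eigenvalues, chosen so that the ``exceptional'' eigenvectors — those not shared between $G$ and $H$ — are exactly the ones that are nonzero on the basepoint coordinate and/or pair with the coupling, and these are handled by a small, hands-on computation (the abstract says the proof ``relies on a perturbation of (most of) the distance eigenvectors of one graph to yield the distance eigenvectors of the other,'' so I anticipate that all but a bounded number, say two or three, of the eigenvectors can be taken literally equal on the overlap, with the rest perturbed in a controlled way).

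The steps in order: (1) fix the vertex ordering and write $D^{(GK)}$ and $D^{(HK)}$ in the block form above, noting the $K$-data and the coupling vector $\mathbf{d}^{(K)}_v$ agree; (2) compare $D^{(G)}$ and $D^{(H)}$ directly (they are printed in the excerpt) and verify they have the same spectrum, identifying an orthonormal eigenbasis of each and a bijection between them respecting eigenvalues; (3) check that the eigenvectors of $D^{(G)}$ which vanish at the coordinate $u$ (for both $u=0$ and $u=1$) can be matched with eigenvectors of $D^{(H)}$ vanishing at $u$ with the same eigenvalue — these extend by zero to eigenvectors of $D^{(GK)}$, resp.\ $D^{(HK)}$, with the same eigenvalue, since the coupling term annihilates them; (4) for the remaining handful of eigenvalues, show that on the orthogonal complement (within the $G$-block, spanned by $\mathbf{1}$ together with the few ``bad'' eigenvectors) the restricted problem — now a small matrix pencil involving $D^{(G)}$ restricted there, the value $d^{(G)}_{uu}=0$, and the scalar/vector coupling to $\mathbf{1}$ on the $K$-side — has the same eigenvalues for $G$ and for $H$, by an explicit finite check; (5) assemble these into a full eigenbasis of $D^{(GK)}$ and a full eigenbasis of $D^{(HK)}$ with identical eigenvalue multiset, concluding distance cospectrality; and note the construction is uniform in $(K,v)$ precisely because the $K$-dependence entered only through data common to both sides.

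The main obstacle I expect is step~(4): the eigenvectors of $D^{(G)}$ that are \emph{not} supported away from the basepoint — in particular any eigenvector with nonzero $u$-entry, and the interaction with the all-ones direction on the $K$-side — do not simply extend by zero, and their images under the identification genuinely mix the $G$ and $K$ parts. Handling these requires understanding the $2\times 2$ (or $3\times 3$) reduced pencil and checking that the ``$G$ side'' and ``$H$ side'' of this small system are conjugate; equivalently, that the relevant low-dimensional invariants (a trace and a determinant, or the values of the characteristic polynomial at the shared eigenvalues) coincide. Because $K$ is arbitrary, this check must be carried out symbolically in the parameter $\mathbf{d}^{(K)}_v$ (really in the one scalar $\langle \mathbf{1}, \mathbf{d}^{(K)}_v\rangle$ and the norm-type quantities that survive), so the crux is verifying that $G$ and $H$ share not only the same distance spectrum but the same ``basepoint-weighted'' spectral data at vertices $0$ and $1$ — which is exactly the delicate, graph-specific input that makes this particular pair $(G,H)$ work.
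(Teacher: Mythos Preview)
Your strategy --- look for a single $K$-independent similarity $Q=P\oplus I_{K'}$ --- is structurally different from what the paper does, and the plan as written has a concrete gap in step~(3) that cascades into step~(4).

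In step~(3) you claim that an eigenvector $x$ of $D^{(G)}$ with $x_u=0$ extends by zero to an eigenvector of $D^{(GK)}$ because ``the coupling term annihilates it.'' But your own block description of the off-diagonal block as $\mathbf{d}^{(G)}_u\mathbf{1}^\top+\mathbf{1}(\mathbf{d}^{(K)}_v)^\top$ shows that acting on a $G$-supported vector $x$ you get $(\mathbf{d}^{(G)}_u\cdot x)\mathbf{1}_K + (\mathbf{1}\cdot x)\,\mathbf{d}^{(K)}_v = \lambda x_u\,\mathbf{1}_K + \bigl(\textstyle\sum_j x_j\bigr)\mathbf{d}^{(K)}_v$. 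So you need \emph{both} $x_u=0$ and $\sum_j x_j=0$, not just the first. Worse, $D^{(G)}$ has simple spectrum here, so no eigenvector is forced to satisfy either linear condition, let alone both; generically step~(3) captures nothing. All ten $G$-eigendirections are ``bad,'' and your ``small $2\times 2$ or $3\times 3$ pencil'' in step~(4) is actually the full problem. You gesture at the $K$-dependence collapsing to a few scalars, but give no mechanism for why --- and since $K$ enters through the whole matrix $D^{(K)}$ and the whole vector $\mathbf{d}^{(K)}_v$, this is the heart of the matter.

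The paper does not look for a single similarity at all. Instead it takes an arbitrary eigenpair $(\lambda,x)$ of $D^{(GK)}$ with $\lambda\neq -\tfrac12$ and writes down an explicit perturbation $y=x+\Delta$, supported only on coordinates $\{2,\dots,9\}$, with coefficients that are rational functions of $\lambda$ and linear in $x_3,x_5,x_7,x_8$; it then verifies $(D^{(HK)}y)_i=\lambda y_i$ coordinate-by-coordinate, using three specific linear relations among rows of $D^{(G)}$ to kill the ``remainder'' terms. Because the map $x\mapsto y$ depends on $\lambda$, it is not induced by a single matrix conjugation --- so even if your framework could be made to work, the route is genuinely different. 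The advantage of the paper's approach is that it bypasses the question of whether a $K$-independent conjugating $P$ with $P\mathbf{1}=\mathbf{1}$ and $P\mathbf{d}^{(G)}_u=\mathbf{d}^{(G)}_u$ exists; it simply matches eigenvectors one eigenvalue at a time, then handles the excluded eigenvalue $-\tfrac12$ by a dimension count.
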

\begin{proof}
When identifying the graph $K$ onto $G$, we will enforce that the vertices will be labeled as follows. The vertex $v$ will have the same label as $u$, and the remaining vertices will be labeled with the set $\{10,11,12,\ldots,n\}$. We similarly label $HK$.  Let $D^{(GK)}$ be the distance matrix of $GK$ and similarly $D^{(HK)}$ for $HK$. The proof given will handle the case where $u=0$. The case where $u=1$ is done similarly. 

Let $(\lambda, x)$ be an eigenpair for $D^{(GK)}$ where $\lambda \neq -\frac{1}{2}$.  We claim the vector $y:=x+\Delta$ is an eigenvector of $D^{(HK)}$ for eigenvalue $\lambda$, where 
\[ \Delta_i = \left\{ \begin{array}{c l} 0& i \in \{0,1,10,11,12,\ldots, n\} \\ \alpha & i \in \{2,9\} \\ -\alpha & i \in \{4,6\} \\ \beta & i \in \{3,7\} \\ -\alpha-\beta & i=5 \\ \alpha-\beta & i = 8 \end{array} \right. \]
where 
\[\alpha = \frac{-x_3-x_5-x_7-x_8}{2\lambda+1} \: \text{ and } \: \beta=\frac{\lambda+1}{2\lambda+1}\left(x_5+x_8\right)-\frac{\lambda}{2\lambda+1}\left(x_3+x_7\right).\]

To prove this, we will consider $(D^{(HK)}y)_i$ for all $i$. By inspection of the two matrices, $d_{ij}^{(H)}=d_{ij}^{(G)}$ for all $j \in \{0,1,2,\ldots 9\}$ for $i \in \{0,1\}$. A straightforward algebraic substitution and simplification proves $(D^{(HK)}y)_i=\lambda y_i$ for $i \in \{0,1\}$.
%\begin{align*}
%D^{(HK)}y_0 &= \sum_{j=0}^n d_{0j}^{(HK)} y_j \\
%&= \sum_{j=0}^9  d_{0j}^{(HK)} (x_j+\Delta_j) +\sum_{j=10}^n  d_{0j}^{(HK)} (x_j+\Delta_j)  \\
%&= \sum_{j=0}^9  d_{0j}^{(H)} (x_j+\Delta_j)  +\sum_{j=10}^n  d_{vj}^{(K)} (x_j+\Delta_j)  \\
%&= \sum_{j=0}^9  d_{0j}^{(G)}(x_j+\Delta_j) +\sum_{j=10}^n  d_{vj}^{(K)} (x_j+\Delta_j)  \\
%&= \sum_{j=0}^9  d_{0j}^{(G)}x_j+\sum_{j=10}^n  d_{0j}^{(K)}x_j+ \sum_{j=0}^9  d_{0j}^{(G)}\Delta_j+\sum_{j=10}^n  d_{0j}^{(K)}\Delta_j \\
%&=\sum_{j=0}^n  d_{0j}^{(GK)}x_j+ \sum_{j=0}^9  d_{0j}^{(G)}\Delta_j\\
%&=\lambda x_0+\alpha(d_{02}^{(G)}-d_{04}^{(G)}-d_{05}^{(G)}-d_{06}^{(G)}+d_{08}^{(G)}+d_{09}^{(G)})+\beta(d_{03}^{(G)}-d_{05}^{(G)}+d_{07}^{(G)}-d_{08}^{(G)})\\
%&=\lambda x_0 = \lambda y_0
%\end{align*}
%\begin{align*}
%D^{(HK)}y_1 &= \sum_{j=0}^n d_{1j}^{(HK)} y_j \\
%&= \sum_{j=0}^9  d_{1j}^{(HK)} (x_j+\Delta_j) +\sum_{j=10}^n  d_{1j}^{(HK)} (x_j+\Delta_j) \\
%&= \sum_{j=0}^9  d_{1j}^{(H)} (x_j+\Delta_j) +\sum_{j=10}^n  (d_{10}^{(H)}+ d_{vj}^{(K)}) (x_j+\Delta_j) \\
%&= \sum_{j=0}^9  d_{1j}^{(G)} (x_j+\Delta_j) +\sum_{j=10}^n  (d_{10}^{(G)}+d_{vj}^{(K)}) (x_j+\Delta_j) \\
%&= \sum_{j=0}^9  d_{1j}^{(G)} x_j+\sum_{j=10}^n  (d_{10}^{(G)}+d_{vj}^{(K)}) x_j+\sum_{j=0}^9  d_{1j}^{(G)}\Delta_j \\
%&= \sum_{j=0}^n  d_{1j}^{(GK)} x_j+\sum_{j=0}^9  d_{1j}^{(G)}\Delta_j \\
%&=  \lambda x_1 +\alpha(d_{12}^{(G)}-d_{14}^{(G)}-d_{15}^{(G)}-d_{16}^{(G)}+d_{18}^{(G)}+d_{19}^{(G)}) +\beta(d_{13}^{(G)}-d_{15}^{(G)}+d_{17}^{(G)}-d_{18}^{(G)})\\
%&=\lambda x_1 = \lambda y_1
%\end{align*}
Consider $i \in \{10,11,\ldots,n\}$. We will fully elaborate the steps taken in the following work, as similar processes will be repeated frequently. 
\[ (D^{(HK)}y)_i = \sum_{j=0}^n d_{ij}^{(HK)} y_j = \sum_{j=0}^9  d_{ij}^{(HK)} (x_j+\Delta_j) +\sum_{j=10}^n  d_{ij}^{(HK)} (x_j+\Delta_j) \]
We immediately break the summation into the first ten vertices and the rest, as we will need to treat each group separately. We also substitute the definition of $y$. Next, we use the observations from Section~\ref{ssec:GI} to break the distances in $HK$ to distances in $H$ and $K$, recalling that $v$ is the vertex in the graph $K$ we identify with $0$ in $H$ to create $HK$.  
\begin{align*}
&= \sum_{j=0}^9  (d_{iv}^{(K)}+d_{0j}^{(H)}) (x_j+\Delta_j) +\sum_{j=10}^n   d_{ij}^{(K)} (x_j+\Delta_j) \\
&= \sum_{j=0}^9  (d_{iv}^{(K)}+d_{0j}^{(G)})(x_j+\Delta_j) +\sum_{j=10}^n   d_{ij}^{(K)} (x_j+\Delta_j) 
\end{align*}
We can substitute $d_{0j}^{(G)}$ for $d_{0j}^{(H)}$ by inspection of the first rows of the matrices $D^{(G)}$ and $D^{(H)}$.  We continue by regrouping terms and recombining sums of distances in $G$ and $K$ to be distances in $GK$, again by the observations from Section~\ref{ssec:GI}.
\begin{align*}
&=\sum_{j=0}^9  (d_{iv}^{(K)}+d_{0j}^{(G)})x_j+\sum_{j=10}^n   d_{ij}^{(K)} x_j+\sum_{j=0}^9  (d_{iv}^{(K)}+d_{0j}^{(G)})\Delta_j  \\
&=\sum_{j=0}^n  d_{ij}^{(GK)} x_j+\sum_{j=0}^9  (d_{iv}^{(K)}+d_{0j}^{(G)})\Delta_j  \\
&= \lambda x_i+\alpha(3d_{iv}^{(K)}-3d_{iv}^{(K)}+d_{02}^{(G)}-d_{04}^{(G)}-d_{05}^{(G)}-d_{06}^{(G)}+d_{08}^{(G)}+d_{09}^{(G)}) 
\\ & \hspace{10 pt} +\beta(2d_{iv}^{(K)}-2d_{iv}^{(K)}+d_{03}^{(G)}-d_{05}^{(G)}+d_{07}^{(G)}-d_{08}^{(G)}) \\
&=\lambda x_i = \lambda y_i.
\end{align*}
The last few steps result from the fact that $(\lambda,x)$ is an eigenpair for $D^{(GK)}$ and by direct computation and substitution. This proves $(D^{(HK)}y)_i=\lambda y_i$ for $i \in \{10,11,\ldots, n\}$. 

We now consider the vertices $\{2,3,\ldots,8\}$ by showing the case where $i=2$ and considering how the work generalizes. For this case, notice that $d_{2j}^{(H)}=d_{2j}^{(G)}$ for $j\not \in \{5,8\}$ and $d_{2j}^{(H)}=d_{2j}^{(G)}-1$ for $j \in \{5,8\}$.  
\begin{align*}
(D^{(HK)}y)_2 &= \sum_{j=0}^n d_{2j}^{(HK)} y_j \\
&= \sum_{j=0}^9  d_{2j}^{(HK)} (x_j+\Delta_j) +\sum_{j=10}^n  d_{2j}^{(HK)} (x_j+\Delta_j) \\
&= \sum_{j=0}^9  d_{2j}^{(H)} (x_j+\Delta_j) +\sum_{j=10}^n  (d_{20}^{(H)}+ d_{vj}^{(K)}) (x_j+\Delta_j) \\
&= \sum_{\substack{j=0 \\ j\neq 5,8}}^9  d_{2j}^{(G)} (x_j+\Delta_j)+(d_{25}^{(G)}-1)(x_5+\Delta_5) \\
&\hspace{20pt}+(d_{28}^{(G)}-1)(x_8+\Delta_8) +\sum_{j=10}^n  (d_{20}^{(G)}+d_{vj}^{(K)}) (x_j+\Delta_j) \\
&= \sum_{j=0}^9 d_{2j}^{(G)} x_j+\sum_{j=10}^n  (d_{20}^{(G)}+d_{vj}^{(K)}) x_j+\sum_{j=0}^9 d^{(G)}_{2j}\Delta_j -x_5-x_8-\Delta_5-\Delta_8 \\
&= \sum_{j=0}^n d_{2j}^{(GK)} x_j+\sum_{j=0}^9 d^{(G)}_{2j}\Delta_j -x_5-x_8-\Delta_5-\Delta_8 \\
&= \lambda x_2 +\alpha(d_{22}^{(G)}-d_{24}^{(G)}-d_{25}^{(G)}-d_{26}^{(G)}+d_{28}^{(G)}+d_{29}^{(G)}) 
\\ & \hspace{20 pt} +\beta(d_{23}^{(G)}-d_{25}^{(G)}+d_{27}^{(G)}-d_{28}^{(G)})-x_5-x_8-\Delta_5-\Delta_8 \\
&= \lambda x_2-\beta-x_5-x_8-(-\alpha-\beta)-(\alpha-\beta) \\
&= \lambda (x_2+\alpha)-\lambda\alpha+\beta-x_5-x_8 \\
&= \lambda y_2-\lambda\alpha+\beta-x_5-x_8 \\
\end{align*}

Let $c_2$ be the ``remainder'' terms, specifically $c_2:=-\lambda\alpha+\beta-x_5-x_8$.  To finish the claim that $(D^{(HK)}y)_2=\lambda y_2$, it would suffice to show $c_2=0$:
\begin{align*}-\lambda \alpha +\beta-x_5-x_8 &= -\lambda\left(\frac{-x_3-x_5-x_7-x_8}{2\lambda+1}\right)-x_5-x_8\\
&\hspace{20pt} +\left(\frac{\lambda+1}{2\lambda+1}\left(x_5+x_8\right)-\frac{\lambda}{2\lambda+1}\left(x_3+x_7\right)\right) \\ 
&= (x_3+x_7)\left(\frac{\lambda}{2\lambda+1}-\frac{\lambda}{2\lambda+1}\right)+(x_5+x_8)\left(\frac{\lambda}{2\lambda+1}-1+\frac{\lambda+1}{2\lambda+1}\right) \\ &=0.
\end{align*}

Therefore, by the definition of $\alpha$ and $\beta$, the claim holds for $i=2$.  Repeating this process, we calculate the remainder terms $c_i$ for $i \in \{3,4,\ldots,8\}$ (meaning $(D^{(HK)}y)_i=\lambda y_i+c_i$ for all $i$) in a similar fashion. These are listed below.
\begin{align*}
c_2 &= -\lambda\alpha+\beta-x_5-x_8 \\
c_3 &= -\lambda\beta -2\alpha-\beta+x_4+x_5+x_6-x_8 \\
c_4 &= \lambda\alpha + \alpha+\beta +x_3+x_7 \\
c_5 &= \lambda\alpha+\lambda\beta +3\beta-x_2+x_3+x_7-x_9 \\
c_6 &= \lambda\alpha + \alpha+\beta +x_3+x_7 \\
c_7 &= -\lambda\beta -2\alpha-\beta+x_4+x_5+x_6-x_8 \\
c_8 &= -\lambda \alpha+\lambda\beta -2\alpha+\beta-x_2-x_3-x_7-x_9 \\
c_9 &= -\lambda\alpha+\beta-x_5-x_8
\end{align*}

Similarly to the case where $i=2$, our goal is to show that all remaining $c_i$ are equal to zero. Substitution of $\alpha$ and $\beta$ suffices for $c_4$.
%\begin{align*}
%c_4&= \lambda\alpha + \alpha+\beta +x_3+x_7 \\
%&= (\lambda+1)\left(\frac{-x_3-x_5-x_7-x_8}{2\lambda+1}\right)+x_3+x_7\\
%&\hspace{20pt} +\left(\frac{\lambda+1}{2\lambda+1}\left(x_5+x_8\right)-\frac{\lambda}{2\lambda+1}\left(x_3+x_7\right)\right) \\
%&=(x_3+x_7)\left(\frac{-\lambda-1}{2\lambda+1}+1-\frac{\lambda}{2\lambda+1}\right)+(x_5+x_8)\left(\frac{-\lambda-1}{2\lambda+1}+\frac{\lambda+1}{2\lambda+1}\right) \\ &=0
%\end{align*}
To prove $c_3$ and $c_5$, we consider combinations of particular rows of $D^{(GK)}$. We claim the following three equations hold:
\begin{equation}
2x_3+x_4+2x_5+x_6+2x_7=\lambda\left(x_2-x_4-x_5-x_6+x_8+x_9\right),
\label{rowsum1}
\end{equation}
\begin{equation}
x_2-x_3-3x_5-x_7-3x_8+x_9 = \lambda\left(-x_2-x_3+x_4+2x_5+x_6-x_7-x_9\right),
\label{rowsum2}
\end{equation}
and
\begin{equation}
x_2+x_3+x_4-x_5+x_6+x_7-3x_8+x_9 = \lambda\left(-x_3-x_7+x_5+x_8\right).
\label{rowsum3}
\end{equation}

We will only prove~\eqref{rowsum1}, as this proof can be generalized into proofs for~\eqref{rowsum2} and \eqref{rowsum3}. Let $D_i^{(GK)}$ denote the $i$th row of the matrix $D^{(GK)}$. Further, let $e_i$ be the $i$th standard row vector. Consider the following sum and difference of rows of $D^{(GK)}$
\[m:=D^{(GK)}_2-D^{(GK)}_4-D^{(GK)}_5-D^{(GK)}_6+D^{(GK)}_8+D^{(GK)}_9.\]
By definition, the $i$th entry of $m$ is 
\[m_i=d^{(GK)}_{i2}-d^{(GK)}_{i4}-d^{(GK)}_{i5}-d^{(GK)}_{i6}+d^{(GK)}_{i8}+d^{(GK)}_{i9}.\]
If $i \in \{0,1,\ldots,9\}$, $d_{ij}^{(GK)}=d_{ij}^{(G)}$ for $j \in \{0,1,\ldots,9\}$, therefore the first 10 entries of $m$ can be computed directly from $D^{(G)}$.  Consider $i \in \{10,11,\ldots, n\}$. Recall $d_{ij}^{(GK)}=d_{iv}^{(K)}+d_{0j}^{(G)}$ for $j \in \{0,1,\ldots,9\}$.  In this case, the $i$th entry of $m$ is 
\begin{align*}
m_i &=3d_{iv}^{(K)}-3d_{iv}^{(K)}+d_{02}^{(G)}-d^{(G)}_{04}-d^{(G)}_{05}-d^{(G)}_{06}+d^{(G)}_{08}+d^{(G)}_{09} \\
&= d_{02}^{(G)}-d^{(G)}_{04}-d^{(G)}_{05}-d^{(G)}_{06}+d^{(G)}_{08}+d^{(G)}_{09} \\
&=0
\end{align*}
therefore we can write the following equation
\[D^{(GK)}_2-D^{(GK)}_4-D^{(GK)}_5-D^{(GK)}_6+D^{(GK)}_8+D^{(GK)}_9=2e_3+e_4+2e_5+e_6+2e_7.\]
Because $x$ is an eigenvector of $D^{(GK)}$, $D_i^{(GK)}x=(D^{(GK)}x)_i=\lambda x_i$ for all $i$.  By multiplying by $x$ on both sides of the equation above on the right, we see   
\begin{align*}
\left(D^{(GK)}_2-D^{(GK)}_4-D^{(GK)}_5-D^{(GK)}_6+D^{(GK)}_8+D^{(GK)}_9\right)x&=\left(2e_3+e_4+2e_5+e_6+2e_7\right)x \\
\lambda(x_2-x_4-x_5-x_6+x_8+x_9)&=2x_3+2x_5+2x_7+x_4+x_6
\end{align*}
which is~\eqref{rowsum1}.  Equations~\eqref{rowsum2} and~\eqref{rowsum3} follow similarly by considering appropriate row combinations. 

With these three equations, we can prove $c_3$ and $c_5$ are zero. We begin the work for $c_3$ by substituting the definitions of $\alpha$ and $\beta$:
\begin{align*} 
c_3 &= -\lambda\beta -2\alpha-\beta+x_4+x_5+x_6-x_8 \\
&=(-\lambda-1)\left(\frac{\lambda+1}{2\lambda+1}\left(x_5+x_8\right)-\frac{\lambda}{2\lambda+1}\left(x_3+x_7\right)\right) \\& \hspace{20 pt} -2\left(\frac{-x_3-x_5-x_7-x_8}{2\lambda+1} \right)+x_4+x_5+x_6-x_8 \\
&= \frac{\lambda^{2} {\left(x_{3} - x_{5} + x_{7} - x_{8}\right)} + \lambda{\left(x_{3} + 2 x_{4} + 2x_{6} + x_{7} -  4x_{8}\right)}}{2 \lambda + 1} \\& \hspace{20 pt} + \frac{ 2 x_{3} + x_{4} + 2 x_{5} + x_{6} + 2 x_{7}}{2\lambda+1} 
\end{align*}
Consider the last term above.  The numerator is the left hand side of~\eqref{rowsum1}, and we can substitute the right hand side. 
\begin{align*}
&= \frac{\lambda^{2} {\left(x_{3} - x_{5} + x_{7} - x_{8}\right)} + \lambda{\left(x_{3} + 2 x_{4} + 2x_{6} + x_{7} -  4x_{8}\right)}}{2 \lambda + 1} \\& \hspace{20 pt} + \frac{\lambda\left(x_2-x_4-x_5-x_6+x_8+x_9\right)}{2\lambda+1} \\ 
&= \frac{\lambda^{2} {\left(x_{3} - x_{5} + x_{7} - x_{8}\right)} + \lambda{\left(x_2+x_{3} +  x_{4}-x_5 + x_{6} + x_{7} -  3x_{8}+x_9\right)}}{2 \lambda + 1}
\end{align*}
Here we see the linear combination of terms that is multiplied by $\lambda$ is the left hand side of~\eqref{rowsum3}. Similarly to before, we will substitute the right hand side and cancel. 
\begin{align*}
&= \frac{\lambda^{2} {\left(x_{3} - x_{5} + x_{7} - x_{8}\right)} + \lambda\left({\lambda\left(-x_3-x_7+x_5+x_8\right)}\right)}{2 \lambda + 1} =0
\end{align*}
%\begin{align*}
%c_5&= \lambda\alpha+\lambda\beta +3\beta-x_2+x_3+x_7-x_9 \\
% &= \lambda\left(\frac{-x_3-x_5-x_7-x_8}{2\lambda+1} +\right) -x_2+x_3+x_7-x_9 \\ &\hspace{20 pt} +(\lambda+3)\left( \frac{\lambda+1}{2\lambda+1}\left(x_5+x_8\right)-\frac{\lambda}{2\lambda+1}\left(x_3+x_7\right)\right) \\ 
% &=-\frac{\lambda^{2} {\left(x_{3} - x_{5} + x_{7} - x_{8}\right)} + \lambda {\left(2x_{2} + 2  x_{3} - 3  x_{5} + 2  x_{7} - 3  x_{8} + 2 x_{9}\right)}}{2  \lambda + 1}\\ &\hspace{20 pt}- \frac{ x_{2} - x_{3} - 3  x_{5} - x_{7} - 3  x_{8} +x_{9}}{2\lambda+1} \\
%  &=-\frac{\lambda^{2} {\left(x_{3} - x_{5} + x_{7} - x_{8}\right)} + \lambda {\left(2x_{2} + 2  x_{3} - 3  x_{5} + 2  x_{7} - 3  x_{8} + 2 x_{9}\right)}}{2  \lambda + 1}\\ &\hspace{20 pt}- \frac{ \lambda\left(-x_2-x_3+x_4+2x_5+x_6-x_7-x_9\right)}{2\lambda+1} \\
%  &=-\frac{\lambda^{2} {\left(x_{3} - x_{5} + x_{7} - x_{8}\right)} + \lambda {\left(x_2+x_3+x_4-x_5+x_6+x_7-3x_8+x_9 \right)}}{2  \lambda + 1}\\
%&= -\frac{\lambda^{2} {\left(x_{3} - x_{5} + x_{7} - x_{8}\right)} + \lambda\left({\lambda\left(-x_3-x_7+x_5+x_8\right)}\right)}{2  \lambda + 1} \\ 
%&= 0
%\end{align*}
This proves $c_3=0$. A similar substitution of~\eqref{rowsum2} and \eqref{rowsum3} yield $c_5=0$. Finally, notice $c_8=c_5-2c_6$, and thus $c_8$ is also zero. This validates the claim that $y$ is an eigenvector of $D^{(HK)}$.

We note the mapping of eigenpairs of $D^{(GK)}$ where $\lambda\neq -\frac{1}{2}$ to those of $D^{(HK)}$ where $\lambda\neq -\frac{1}{2}$ is injective. Suppose $(\lambda,x),(\lambda,x')$ are eigenpairs of $D^{(GK)}$ such that $y=y'$, or equivalently $x+\Delta=x'+\Delta'$. We will show $\Delta=\Delta'$ by showing $\alpha=\alpha'$ and $\beta=\beta'$.  
\begin{align*}
y_3+y_5+y_7+y_8 &= y'_3+y'_5+y'_7+y'_8 \\
x_3+x_5+x_7+x_8+2\alpha-2\alpha+2\beta-2\beta &= x'_3+x'_5+x'_7+x'_8+2\alpha'-2\alpha'+2\beta'-2\beta' \\
x_3+x_5+x_7+x_8 &= x'_3+x'_5+x'_7+x'_8 \\
\frac{x_3+x_5+x_7+x_8}{2\lambda+1} &= \frac{x'_3+x'_5+x'_7+x'_8}{2\lambda+1} \\ 
-\alpha &=-\alpha' \\
\alpha &= \alpha'
\end{align*}
To prove $\beta=\beta'$, we recall that 
\[-c_2=\lambda\alpha-\beta+x_5+x_8=0\]
therefore, since $\lambda \alpha= \lambda\alpha'$, 
\begin{align*}
\lambda\alpha-\beta+x_5+x_8 &=\lambda\alpha'-\beta'+x_5'+x_8' \\
-\beta+x_5+x_8 &=-\beta'+x_5'+x_8' \\
-\beta+x_5-\alpha-\beta+x_8+\alpha-\beta+2\beta &=-\beta'+x_5'-\alpha'-\beta'+x_8'+\alpha'-\beta'+2\beta' \\
y_5+y_8+\beta &=y_5'+y_8'+\beta' \\
\beta &= \beta'. 
\end{align*}
Therefore the mapping is injective as claimed. Further, the mapping is also surjective, as we could have started with the graph $HK$ and performed the perturbation in reverse to get eigenpairs of $D^{(GK)}$. 

What remains to be considered are eigenpairs where $\lambda=-\frac{1}{2}$, if any exist.  However, since the map is bijective where defined, the dimensions of the eigenspaces for all eigenvalues not equal to $-\frac{1}{2}$ must be the same for both $D^{(GK)}$ and $D^{(HK)}$. Because the sum of the dimensions of all eigenspaces must be $n$, the multiplicity of $-\frac{1}{2}$ as an eigenvalue must be the same for both $D^{(GK)}$ and $D^{(HK)}$. Therefore the dimensions of all eigenspaces are the same, and the graphs $GK$ and $HK$ are distance cospectral as claimed. 
\end{proof}

We note that the theorem yields a construction for large distance cospectral families with a variety of edge counts. Consider identifying $k$ copies of $G$ at a single vertex, namely vertex $0$ of each copy. By repeated applications of the theorem, we can exchange out copies of $G$ with copies of $H$ one at a time. Doing this, we construct $k+1$ graphs which are mutually distance cospectral and with edge counts $\{16k,16k+1,\ldots,17k\}$. 

\section{Distance switching}
\label{sec:switch}
The proof in Section~\ref{sec:diff} relied on a perturbation of the distance eigenvectors of one graph to yield the distance eigenvectors of another. In this section, we explore a similar technique when considering pairs of distance cospectral graphs related by restricted edge switching.  Suppose a graph $G$ has the following two properties. First, $G$ has one of the graphs in Figure~\ref{can} as an induced subgraph.
\begin{figure}[htb]
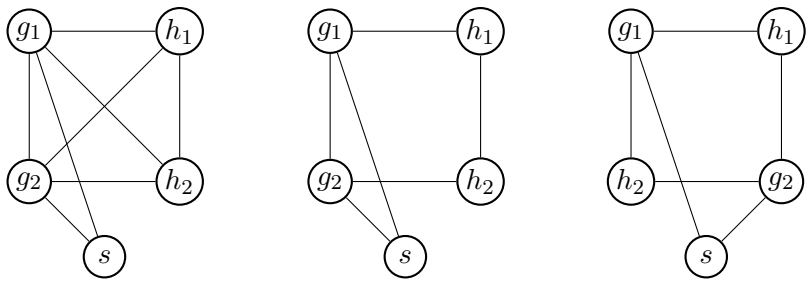

\centering
\quintetspic
\caption{Subgraph switching candidates}
\label{can}
\end{figure}

Second, we can partition the vertices in $V(G) \setminus \{g_1,g_2,h_1,h_2\}$ into two sets, $A$ and $B$, such that for all $v \in A$  
\[ d_{vg_1}^{(G)}+d_{vg_2}^{(G)}-d_{vh_1}^{(G)}-d_{vh_2}^{(G)}=-2,\]

and for all vertices $v \in B$
\[ d_{vg_1}^{(G)}+d_{vg_2}^{(G)}-d_{vh_1}^{(G)}-d_{vh_2}^{(G)}=0.\]

We construct a new graph $H$ as follows. Let $V(H)=V(G)$, and 
\[E(H)=E(G)\setminus \{(s,g_1),(s,g_2)\} \cup \{(s,h_1),(s,h_2)\}.\]

%GM construction here for possible inclusion.
%\begin{quotation}
%Let $G$ be a graph and let $\pi=(C_1,C_2,\ldots,C_k,D)$ be a partition of $V(G)$. Suppose that, whenever $1\leq i,j \leq k$ and $v \in D$, we have 
%\begin{itemize}
%\item[(a.)] any two vertices in $C_i$ have the same number of neighbors in $C_j$, and
%\item[(b.)] $v$ has either $0,n_i/2$ or $n_i$ neighbors in $C_i$, where $n_i=|C_i|$.
%\end{itemize} 
%The graph $G^{(\pi)}$ formed by \emph{local switching in $G$ with respect to $\pi$} is obtained from $G$ as follows. For each $v \in D$ and $1 \leq i \leq k$ such that $v$ has $n_i/2$ neighbors in $C_i$, delete these $n_i/2$ edges and join $v$ instead to the other $n_i/2$ vertices in $C_i$.
%\end{quotation}

We note this switching is somewhat similar to Godsil-McKay switching. Godsil and McKay's construction for local switching requires a switching set $D$ and a partition of the remaining vertices into sets $\{C_i\}$ where for every vertex in $v \in D$ and every set $C_i$, $v$ is either adjacent to all, none, or exactly half of the vertices in $C_i$. The switching is done by exchanging edges for non-edges between $D$ and the sets $C_i$ where the vertices in $D$ are adjacent to half of the vertices in $C_i$. See Section 2.1 of~\cite{GM} for a full explanation of the construction, including further requirements on the sets $C_i$ not stated here. If we consider the switching set $D$ to be the singleton $s$ and one of the $C_i$ of the partition to be $\{g_1,g_2,h_1,h_2\}$, the construction of $H$ can be likened to Godsil and McKay's construction. 

Because $V(H)=V(G)$ and because we will be referencing distances between vertices in both $G$ and $H$, we will frequently reference the vertex set as simply $V$. 

\begin{theorem} If for all $v \in B$, $d_{vu}^{(H)}=d_{vu}^{(G)}$ for all $u \in V$ and if for all $w \in A$, $d_{wu}^{(H)} = d_{wu}^{(G)}$ for all $u \in V\setminus \{g_1,g_2,h_1,h_2\}$ and 
\[d_{wg_i}^{(H)}=d_{wg_i}^{(G)}+ 1 \: \text{ and } \: d_{wh_i}^{(H)}=d_{wh_i}^{(G)}- 1 \]
for $i \in \{1,2\}$, then $G$ and $H$ are distance cospectral. 
\label{thrm:switching}
\end{theorem}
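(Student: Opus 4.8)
The plan is to produce an explicit orthogonal matrix $Q$ with $D^{(H)}=Q\,D^{(G)}\,Q$; then $D^{(G)}$ and $D^{(H)}$ are similar, hence cospectral, and (unlike in Section~\ref{sec:diff}) no eigenvalue needs to be handled separately. Work with column vectors indexed by $V$, let $\mathbf{1}_A\in\{0,1\}^{V}$ be the indicator vector of $A$, and put $\mathbf{c}:=e_{g_1}+e_{g_2}-e_{h_1}-e_{h_2}$. Since $\{g_1,g_2,h_1,h_2\}$ is disjoint from $A$ we have $\mathbf{c}^{\top}\mathbf{1}_A=0$, and $\mathbf{c}^{\top}\mathbf{c}=4$.

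First I would pin down $E:=D^{(H)}-D^{(G)}$. By hypothesis every row (hence, by symmetry, every column) of $E$ indexed by $B$ is zero; the row of $E$ indexed by $w\in A$ is zero except for entries $+1$ in columns $g_1,g_2$ and $-1$ in columns $h_1,h_2$; and the $4\times4$ principal submatrix of $E$ on $\{g_1,g_2,h_1,h_2\}$ is zero. This last fact is not literally among the hypotheses, but it follows from Figure~\ref{can}: in each of the three admissible subgraphs all pairwise distances among $g_1,g_2,h_1,h_2$ equal $1$ or $2$ and are therefore already minimal, so the edge switch at $s$ cannot change them. Assembling these entries (and using symmetry) gives the rank-$\le 2$ description
\[ D^{(H)} \;=\; D^{(G)} + \mathbf{c}\,\mathbf{1}_A^{\top} + \mathbf{1}_A\,\mathbf{c}^{\top}. \]

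Next I would establish the key identity. The $v$-th entry of $D^{(G)}\mathbf{c}$ is $d^{(G)}_{vg_1}+d^{(G)}_{vg_2}-d^{(G)}_{vh_1}-d^{(G)}_{vh_2}$, which by the defining properties of the partition equals $-2$ for $v\in A$ and $0$ for $v\in B$; on the four coordinates $g_1,g_2,h_1,h_2$ one checks in the relevant subgraph of Figure~\ref{can} that $D^{(G)}\mathbf{c}$ restricts to $-c_0(1,1,-1,-1)$, where $c_0$ is a constant depending only on which subgraph occurs (one computes $c_0=1,2,0$ for the three, in the order drawn). Hence
\[ D^{(G)}\mathbf{c} = -2\,\mathbf{1}_A - c_0\,\mathbf{c}, \qquad\text{so}\qquad \mathbf{c}^{\top}D^{(G)}\mathbf{c} = -2\,\mathbf{c}^{\top}\mathbf{1}_A - c_0\,\mathbf{c}^{\top}\mathbf{c} = -4c_0 . \]
Now set $Q:=I-\tfrac12\mathbf{c}\,\mathbf{c}^{\top}$, the Householder reflection fixing $\mathbf{c}^{\perp}$. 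Then $Q^{\top}=Q$ and $Q^{2}=I-\mathbf{c}\mathbf{c}^{\top}+\tfrac14(\mathbf{c}^{\top}\mathbf{c})\mathbf{c}\mathbf{c}^{\top}=I$, so $Q$ is orthogonal; expanding $Q\,D^{(G)}\,Q$ and substituting the two displayed formulas, all of the $\mathbf{c}\mathbf{c}^{\top}$ terms cancel and what remains is exactly $D^{(G)}+\mathbf{c}\mathbf{1}_A^{\top}+\mathbf{1}_A\mathbf{c}^{\top}=D^{(H)}$. Thus $D^{(G)}$ and $D^{(H)}$ are orthogonally similar and so distance cospectral. (Equivalently, in the language of Section~\ref{sec:diff}: for any eigenpair $(\lambda,x)$ of $D^{(G)}$ the vector $Qx=x-\tfrac12(\mathbf{c}^{\top}x)\mathbf{c}$ — a perturbation of $x$ supported on $\{g_1,g_2,h_1,h_2\}$ — is a $\lambda$-eigenvector of $D^{(H)}$, and since $Q$ is invertible this map is a bijection, so here there is no analogue of the exceptional eigenvalue $-\tfrac12$.)

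The only step that is not pure formal manipulation is the key identity, and there the work is entirely on the four coordinates $g_1,g_2,h_1,h_2$: the hypotheses control distances from $A$ and from $B$ but say nothing about distances internal to $\{g_1,g_2,h_1,h_2\}$, so one must fall back on the explicit structure of the three candidate subgraphs (together with the observation that distances of length at most $2$ cannot be shortened by vertices outside the subgraph). Once $D^{(G)}\mathbf{c}=-2\mathbf{1}_A-c_0\mathbf{c}$ is in hand, the remainder is a one-line matrix computation.
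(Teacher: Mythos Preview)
Your proof is correct and takes a genuinely different route from the paper's. The paper works eigenvector by eigenvector: for each eigenpair $(\lambda,x)$ of $D^{(G)}$ with $\lambda\neq -k$ (their $k$ is your $c_0$) it sets $y=x+\Delta$ with $\Delta_{g_i}=-\Delta_{h_i}=\frac{1}{\lambda+k}\sum_{j\in A}x_j$, verifies $(D^{(H)}y)_i=\lambda y_i$ case by case over $i\in B$, $i\in\{g_1,g_2\}$, $i\in\{h_1,h_2\}$, $i\in A$, then argues the map $x\mapsto y$ is a bijection and disposes of the exceptional eigenvalue $-k$ by dimension counting. Your Householder reflection $Q=I-\tfrac12\mathbf{c}\mathbf{c}^{\top}$ packages the same perturbation in a $\lambda$-independent form: using the identity $\mathbf{c}^{\top}x=-\frac{2}{\lambda+k}\sum_{j\in A}x_j$ (which is exactly the paper's row-combination equation $D^{(G)}_{g_1}+D^{(G)}_{g_2}-D^{(G)}_{h_1}-D^{(G)}_{h_2}=-2\sum_{j\in A}e_j-k\mathbf{c}^{\top}$ applied to $x$), one sees that $Qx$ coincides with the paper's $y$. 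The payoff of your formulation is that you obtain orthogonal similarity $D^{(H)}=QD^{(G)}Q$ in one stroke, so no eigenvalue needs special handling and the casework on $i$ is replaced by the rank-two identity $E=\mathbf{c}\mathbf{1}_A^{\top}+\mathbf{1}_A\mathbf{c}^{\top}$. Both proofs ultimately rest on the same structural fact $D^{(G)}\mathbf{c}=-2\mathbf{1}_A-c_0\mathbf{c}$; you also make explicit (and justify) the point, used silently in the paper, that the pairwise distances among $\{g_1,g_2,h_1,h_2\}$ are unchanged by the switch.
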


\begin{proof}

We first define a function $c$ on the vertices to be 
\[c(v)= d_{vg_1}^{(G)}+d_{vg_2}^{(G)}-d_{vh_1}^{(G)}-d_{vh_2}^{(G)}.\]
By our assumptions on $G$ and direct computation, we can establish that
\[c(v) =\left\{ \begin{array}{r l} -2 & v \in A \\ 0 & v \in B \\ -k & v \in \{g_1,g_2\} \\ k & v \in \{h_1,h_2\} \end{array} \right. \]
where $k=1$ for the subgraph on the left in Figure~\ref{can}, $k=2$ for the subgraph in the middle of Figure~\ref{can}, and $k=0$ for the subgraph on the right of Figure~\ref{can}.

Suppose $(\lambda,x)$ is an eigenpair for the matrix $D^{(G)}$ for $\lambda\neq -k$. We claim $y:=x+\Delta$ is an eigenvector of $D^{(H)}$ for eigenvalue $\lambda$, where 
\[ \Delta_i=\left\{
\begingroup \renewcommand*{\arraystretch}{1.5}
 \begin{array}{c l} 0& i \not \in \{g_1,g_2,h_1,h_2\} \\ \frac{\sum_{j \in A}x_j}{\lambda+k}& i \in \{g_1,g_2\} \\  -\frac{\sum_{j \in A}x_j}{\lambda+k}& i \in \{h_1,h_2\}.  \end{array} 
 \endgroup  \right. \]

To prove $y$ is indeed an eigenvector of $D^{(H)}$, we will show $(D^{(H)}y)_i=\lambda y_i$ for each vertex $i$.  First suppose $i \in B$. We immediately note that $d_{iv}^{(G)}=d_{iv}^{(H)}$ for all $v \in V$ by the hypotheses of the theorem. Further, recall $c(i)=0$ for all $i \in B$. We therefore have
\begin{align*}
(D^{(H)} y)_i & = \sum_{j \in V} d_{ij}^{(H)} y_j \\  
&= \sum_{j \in V} d_{ij}^{(G)} (x_j+\Delta_j) \\
&= \sum_{j \in V} d_{ij}^{(G)}x_j + \frac{\sum_{j \in A}x_j}{\lambda+k}\left(d_{ig_1}^{(G)}+d_{ig_2}^{(G)}-d_{ih_1}^{(G)}-d_{ih_2}^{(G)}\right) \\
&= \sum_{j \in V} d_{ij}^{(G)}x_j + \frac{\sum_{j \in A}x_j}{\lambda+k}c(i) \\
&= \sum_{j \in V} d_{ij}^{(G)}x_j \\
& =\lambda x_i =\lambda y_i.
\end{align*}

Now suppose $i \in \{g_1,g_2\}$. We know that for all vertices $v \in B$, $d_{iv}^{(H)}=d_{iv}^{(G)}$. Further, for all vertices $u \in A$,  $d_{ui}^{(H)}=d_{ui}^{(G)}+ 1$ and $c(i)=-k$ for $i \in \{g_1,g_2\}$. Combining these facts, we see that

\begin{align*}
(D^{(H)} y )_i & = \sum_{j \in V} d_{ij}^{(H)} y_j \\ 
&= \sum_{j \in A} (d_{ij}^{(G)}+1) (x_j+\Delta_j)  +\sum_{j \in B} d_{ij}^{(G)} (x_j+\Delta_j) + \sum_{j \in \{g_1,g_2,h_1,h_2\} }d_{ij}^{(G)} (x_j+\Delta_j)  \\
&= \sum_{j \in V}d^{(G)}_{ij}x_j +\sum_{j \in A} x_j+\sum_{j \in \{g_1,g_2,h_1,h_2\}} d^{(G)}_{ij}\Delta_j \\
& =\sum_{j \in V} d_{ij}^{(G)}x_j +\sum_{j \in A}x_j + \frac{\sum_{j \in A}x_j}{\lambda+k}\left(d_{ig_1}^{(G)}+d_{ig_2}^{(G)}-d_{ih_1}^{(G)}-d_{ih_2}^{(G)}\right) \\ 
& =\sum_{j \in V} d_{ij}^{(G)}x_j + \sum_{j \in A}x_j + \frac{\sum_{j \in A}x_j}{\lambda+k}c(i) \\ 
&=\lambda x_i + \sum_{j \in A}x_j -k \frac{\sum_{j \in A}x_j}{\lambda+k} \\ 
& =\lambda x_i +  \frac{\lambda \sum_{j \in A}x_j}{\lambda+k} \\
&= \lambda \left(x_i+\frac{\sum_{j \in A}x_j}{\lambda+k}\right) = \lambda y_i. \\
\end{align*}

A similar algebraic computation suffices for the case $i \in \{h_1,h_2\}$. 
%\begin{align*}
%(D^{(H)} y )_i & = \sum_{j \in V} d_{ij}^{(H)} y_j \\ 
%&= \sum_{j \in A} (d_{ij}^{(G)}-1) y_j  +\sum_{j \in B} d_{ij}^{(G)} y_j+ \sum_{j \in \{g_1,g_2,h_1,h_2\} }d_{ij}^{(G)} y_j \\
%&= \sum_{j \in A} (d_{ij}^{(G)}-1) x_j +\sum_{j \in B} d_{ij}^{(G)} x_j + d_{ig_1}^{(G)}\left(x_{g_1}+\frac{\sum_{v \in A}x_v}{\lambda+k}\right) \\ &\hspace{.5 in}+d_{ig_2}^{(G)}\left(x_{g_2}+\frac{\sum_{v \in A}x_v}{\lambda+k}\right) +d_{ih_1}^{(G)}\left(x_{h_1}-\frac{\sum_{v \in A}x_v}{\lambda+k}\right) +d_{ih_2}^{(G)}\left(x_{h_2}-\frac{\sum_{v \in A}x_v}{\lambda+k}\right)\\ 
%& =\sum_{j \in V} d_{ij}^{(G)}x_j -\sum_{j \in A}x_j + \frac{\sum_{v \in A}x_v}{\lambda+k}\left(d_{ig_1}^{(G)}+d_{ig_2}^{(G)}-d_{ih_1}^{(G)}-d_{ih_2}^{(G)}\right) \\ 
%& =\sum_{j \in V} d_{ij}^{(G)}x_j - \sum_{v \in A}x_v + \frac{\sum_{v \in A}x_v}{\lambda+k}\left(c(i)\right) \\
%&=\lambda x_i - \sum_{v \in A}x_v +k \frac{\sum_{v \in A}x_v}{\lambda+k} \\ 
%& =\lambda x_i -  \frac{\lambda \sum_{v \in A}x_v}{\lambda+k} = \lambda \left(x_i-\frac{\sum_{v \in A}x_v}{\lambda+k}\right) = \lambda y_i. \\
%\end{align*}
What remains to be checked are the vertices $i \in A$.  We know $d_{iu}^{(H)} = d_{iu}^{(G)}$ for all $u \in V\setminus \{g_1,g_2,h_1,h_2\}$. Further, $d_{ig_\ell}^{(H)}=d_{ig_\ell}^{(G)}+ 1$ and  $d_{ih_\ell}^{(H)}=d_{ih_\ell}^{(G)}- 1$ for $\ell \in \{1,2\}$. Finally, recall that $c(i)=-2$ for all $i \in A$. Therefore 

\begin{align*}
(D^{(H)} y)_i & = \sum_{j \in V} d_{ij}^{(H)} y_j \\ 
&= \sum_{\substack{j \in V \\ j \not\in \{g_1,g_2,h_1,h_2\}}} d_{ij}^{(G)} (x_j+\Delta_j) +\sum_{j \in \{g_1,g_2\}}(d_{ij}^{(G)}+1)(x_j+\Delta_j)+\sum_{j \in \{h_1,h_2\}}(d_{ij}^{(G)}-1)(x_j+\Delta_j) \\
 &= \sum_{j \in V} d_{ij}^{(G)} x_j + x_{g_1}+x_{g_2}-x_{h_1}-x_{h_2} \\
 & \hspace{1 in}+\frac{\sum_{j \in A}x_j}{\lambda+k}\left( d_{ig_1}^{(G)}+1+d_{ig_2}^{(G)}+1-(d_{ih_1}^{(G)}-1)-(d_{ih_2}^{(G)}-1)\right)  \\
&= \sum_{j \in V} d_{ij}^{(G)} x_j + x_{g_1}+x_{g_2}-x_{h_1}-x_{h_2} +\frac{\sum_{j \in A}x_j}{\lambda+k}\left( d_{ig_1}^{(G)}+d_{ig_2}^{(G)}-d_{ih_1}^{(G)}-d_{ih_2}^{(G)}+4\right)  \\
&= \sum_{j \in V} d_{ij}^{(G)} x_j + x_{g_1}+x_{g_2}-x_{h_1}-x_{h_2} +\frac{\sum_{j \in A}x_j}{\lambda+k}\left( c(i)+4\right)  \\
&= \sum_{j \in V} d_{ij}^{(G)} x_j + x_{g_1}+x_{g_2}-x_{h_1}-x_{h_2}+\frac{2\sum_{j \in A}x_j}{\lambda+k}.
\end{align*}

We pause here to prove the following equality:
\[ x_{g_1}+x_{g_2}-x_{h_1}-x_{h_2}+\frac{2\sum_{j \in A}x_j}{\lambda+k} =0.\]
To do this, let $D_i^{(G)}$ denote the $i$th row of the matrix $D^{(G)}$ and $e_{i}$ denote the $i$th standard row vector. We claim 
\[D_{g_1}^{(G)}+D_{g_2}^{(G)}-D_{h_1}^{(G)}-D_{h_2}^{(G)} = -2\sum_{j \in A} e_{j}-ke_{g_1}-ke_{g_2}+ke_{h_1}+ke_{h_2}.\]
Suppose $m :=D_{g_1}^{(G)}+D_{g_2}^{(G)}-D_{h_1}^{(G)}-D_{h_2}^{(G)}$, and consider the $j$th entry of $m$:
\[m_j = d_{jg_1}^{(G)}+d_{jg_2}^{(G)}-d_{jh_1}^{(G)}+d_{jh_2}^{(G)}.\] 
This by definition is $c(j)$, and the claim follows. 

With this in mind, we multiply both sides by $x$ on the right. Because $x$ is an eigenvector, we know $D_{j}^{(G)}x=(D^{(G)}x)_j=\lambda x_{j}$ for all $j$. Therefore we have
\begin{align*}
\left(D_{g_1}^{(G)}+D_{g_2}^{(G)}-D_{h_1}^{(G)}-D_{h_2}^{(G)} \right)x &= \left(-2\sum_{j \in A}x_j-ke_{g_1}-ke_{g_2}+ke_{h_1}+ke_{h_2}\right)x \\ 
\lambda x_{g_1}+\lambda x_{g_2}-\lambda x_{h_1}-\lambda x_{h_2} &=-2 \sum_{j \in A}x_j-kx_{g_1}-kx_{g_2}+kx_{h_1}+kx_{h_2} \\
(\lambda+k)(x_{g_1}+x_{g_2}-x_{h_1}-x_{h_2})&=-2\sum_{j \in A}x_j \\
x_{g_1}+x_{g_2}-x_{h_1}-x_{h_2} &= \frac{-2\sum_{j \in A}x_j}{\lambda+k}
\end{align*}
which proves the equality. Returning to our case, 
\begin{align*}
(D^{(H)} y)_i & = \sum_{j \in V} d_{sj}^{(G)} x_j + x_{g_1}+x_{g_2}-x_{h_1}-x_{h_2}+\frac{2\sum_{j \in A}x_j}{\lambda+k} \\ &=\lambda x_i = \lambda y_i
\end{align*}
which finishes the case for $i \in A$.  Thus the vector $y$ is an eigenvector as claimed. 

We note that this mapping of eigenpairs of $D^{(G)}$ with $\lambda\neq -k$ to eigenpairs of $D^{(H)}$ for $\lambda \neq -k$ is bijective. Suppose there are two distinct eigenvectors $x$ and $x'$ for $D^{(G)}$ with the same eigenvalue $\lambda \neq -k$ that map to the same eigenvector $y$ for $D^{(H)}$. Then $y_i=x_i=x_i'$ for all $i \not \in \{g_1,g_2,h_1,h_2\}$. If $i \in \{g_1,g_2\}$, then for all $j \in A$,
\begin{align*}
x_i+\frac{\sum_{j \in A}x_j}{\lambda+k} =  y_i &= x'_i+\frac{\sum_{j \in A}x'_j}{\lambda+k} \\
x_i+\frac{\sum_{j \in A}x_j}{\lambda+k} = & \: x'_i+\frac{\sum_{j \in A}x_j}{\lambda+k} \\ 
x_i  =  &\: x'_i 
\end{align*}
and similarly if $i \in \{h_1,h_2\}$. This implies $x=x'$, and the mapping is injective. Certainly the map is also surjective because we could have perturbed instead the eigenvectors of $H$. Because of this, we notice that the dimensions of all eigenspaces are the same for $D^{(G)}$ and $D^{(H)}$ for $\lambda\neq -k$. 

What remains to be considered are eigenpairs $(\lambda,x)$ for $\lambda=-k$, if any exist. However, since the mapping is bijective where defined, the dimensions of the eigenspaces for all eigenvalues not equal to $-k$ are the same for both $D^{(G)}$ and $D^{(H)}$. Because the sum of all eigenspaces must be the order of $G$, the multiplicity of $-k$ must be the same for both graphs. Thus the graphs $G$ and $H$ are distance cospectral as claimed. 
\end{proof}

We note that while this edge switching behavior may seem restrictive, it does explain all pairs of distance cospectral graphs on seven vertices, checked by exhaustive search.  Figure~\ref{sevens} shows these graphs, arranged in the table in order of the induced subgraph contained from Figure~\ref{can}. We point out that the vertex $s$ is shown at the bottom of every embedding.
\begin{figure}[h!]
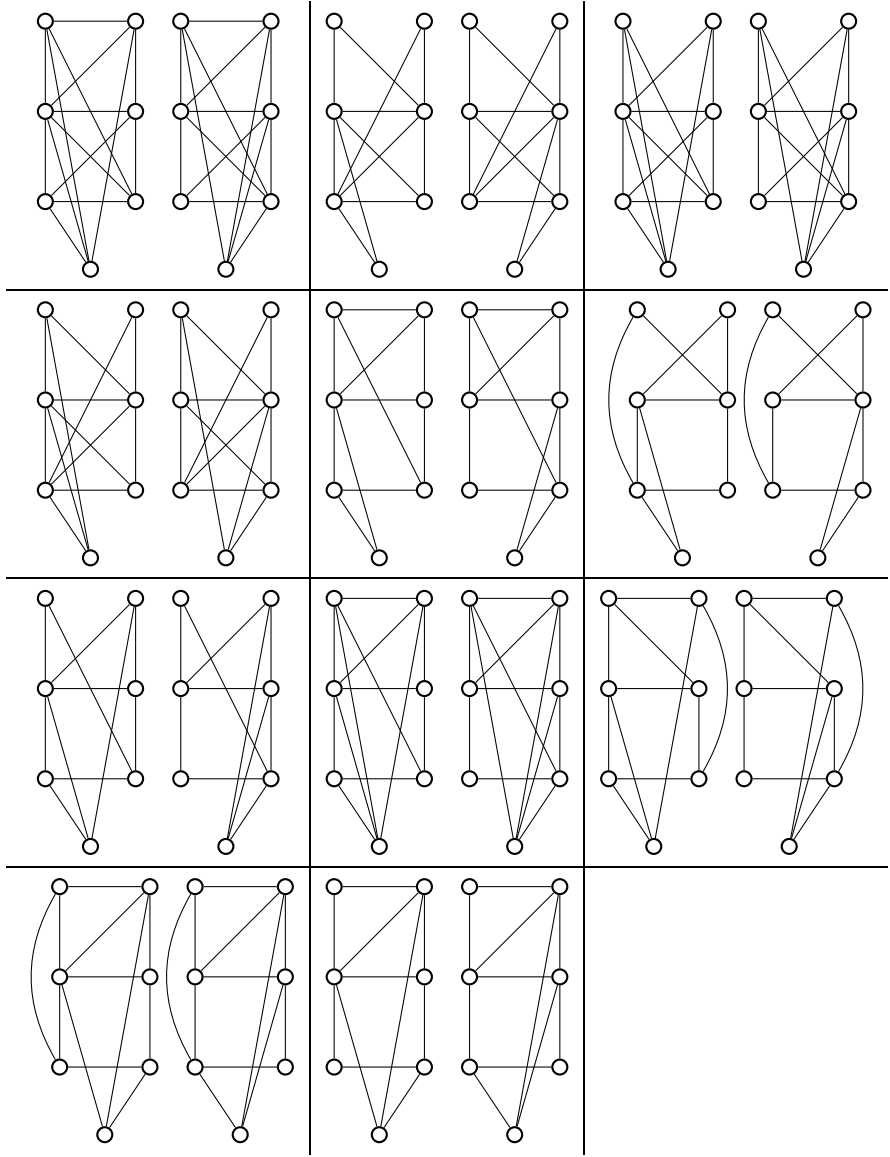

\centering
\sevens
\caption{All distance cospectral pairs on seven vertices.}
\label{sevens}
\end{figure} 

Further, once we have found a pair $G,H$ that follows this switching behavior, we claim that we can construct infinitely many more pairs using graph identification. %Consider any graph $K$ and any vertex $v \in V(K)$. We create a new graph $GK(u,v)$ by picking a vertex $u \in V(G)\setminus \{g_1,g_2,h_1,h_2\}$ and identifying it with $v$. Since $H$ is on the same vertex set as $G$, we can analogously construct $HK(u,v)$. When clear context allows, we will write this pair $GK,HK$.

\begin{corollary} Let $G, H$ be a distance cospectral pair of graphs given by Theorem~\ref{thrm:switching}, and let $u \in V(G)\setminus \{g_1,g_2,h_1,h_2\}$. For any graph $K$ and any vertex $v \in V(K)$, the graphs $GK(u,v), HK(u,v)$ are distance cospectral.
\end{corollary}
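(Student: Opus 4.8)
The plan is to apply Theorem~\ref{thrm:switching} directly to the pair $G' := GK(u,v)$ and $H' := HK(u,v)$, keeping the five distinguished vertices $g_1,g_2,h_1,h_2,s$ and choosing an appropriate extension of the partition $A,B$. First I would record the structural facts, which all hold because $u\notin\{g_1,g_2,h_1,h_2\}$: graph identification adds no edges among those four vertices nor between $s$ and them, so $G'$ contains the same subgraph of Figure~\ref{can} that $G$ does (with the same parameter $k$), and $H'$ is obtained from $G'$ by deleting $(s,g_1),(s,g_2)$ and adding $(s,h_1),(s,h_2)$, exactly as in the construction preceding Theorem~\ref{thrm:switching}. (The case $u=s$ is permitted; it merely gives $s$ new neighbours lying in $V(K)$, which affects none of the above, nor any distance between two vertices of the $G$-portion.)

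Next I would build the partition of $V(G')\setminus\{g_1,g_2,h_1,h_2\}$. Write $c'$ for the function $c$ evaluated in $G'$, and use the three distance identities of Section~\ref{ssec:GI}. For a vertex $z$ in the $G$-portion of $G'$, each of $d^{(G')}_{zg_i},d^{(G')}_{zh_i}$ equals the corresponding $G$-distance, so $c'(z)=c(z)$; hence $c'(g_i)=-k$, $c'(h_i)=k$, and $c'(z)\in\{-2,0\}$ for every $G$-portion vertex other than the four, since $s\in A$ (in each candidate $s$ is adjacent to $g_1,g_2$ and to neither $h_i$, forcing $c(s)=1+1-2-2=-2$). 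For a vertex $w$ in the $K$-portion ($w\neq v$), each of $d^{(G')}_{wg_i},d^{(G')}_{wh_i}$ has the form $d^{(K)}_{wv}+(\text{a $G$-distance from }u)$, so the $d^{(K)}_{wv}$ terms cancel in the alternating sum and $c'(w)=c(u)\in\{-2,0\}$. I therefore set $A':=\{z:c'(z)=-2\}$ and $B':=\{z:c'(z)=0\}$; equivalently, $A'$ and $B'$ are obtained from $A$ and $B$ by renaming $u$ as the identified vertex $uv$ and appending all of $V(K)\setminus\{v\}$ to $A$ if $c(u)=-2$ and to $B$ if $c(u)=0$. This is a valid partition satisfying the $c$-hypotheses of Theorem~\ref{thrm:switching}.

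It then remains to check the distance-change hypothesis of Theorem~\ref{thrm:switching} for $(G',H')$, and here again the identities of Section~\ref{ssec:GI} reduce everything to the hypothesis on $(G,H)$. Two vertices both in the $K$-portion have the same distance (namely their $K$-distance) in $G'$ and in $H'$, so every new vertex behaves exactly as $u$ does. For $z\in B'$ and any other vertex $t$: when $t$ is in the $G$-portion the relevant distances are $d^{(G)}_{z't'}$ and $d^{(H)}_{z't'}$ with $z'\in B$, which agree by hypothesis; when $t$ is in the $K$-portion both distances factor through $u$ as $d^{(G)}_{z'u}+d^{(K)}_{vt}$ resp.\ $d^{(H)}_{z'u}+d^{(K)}_{vt}$, which again agree since $z'\in B$. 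For $z\in A'$ the same reasoning gives $d^{(H')}_{zt}=d^{(G')}_{zt}$ whenever $t\notin\{g_1,g_2,h_1,h_2\}$, while for $t=g_i$ or $t=h_i$ (necessarily in the $G$-portion) the relevant distance is one between a vertex of $A$ and $g_i$, resp.\ $h_i$, hence increases by $1$, resp.\ decreases by $1$, by the hypothesis on $(G,H)$. Thus $(G',H')$ meets every hypothesis of Theorem~\ref{thrm:switching}, so $GK(u,v)$ and $HK(u,v)$ are distance cospectral. The only point that needs care is the small case split on whether $u$ lies in $A$, lies in $B$, or equals $s$, together with the observation that every vertex of $K$ inherits the membership of $u$; once that is in place the argument is pure substitution, so I expect no genuine obstacle.
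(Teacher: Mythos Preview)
Your proposal is correct and follows essentially the same route as the paper: verify that $(GK,HK)$ again satisfies all hypotheses of Theorem~\ref{thrm:switching} by extending the partition $A,B$ so that every $K$-portion vertex inherits the membership of $u$, then use the distance identities of Section~\ref{ssec:GI} to reduce each required distance comparison to the corresponding one for $(G,H)$. Your explicit remarks that $s\in A$ and that $u=s$ causes no trouble are small additions not spelled out in the paper, but otherwise the arguments coincide.
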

\begin{proof}
We first require some notation. Let $A_G$ and $B_G$ be the partition of $V(G)\setminus \{g_1,g_2,h_1,h_2\}$ given by the construction preceding Theorem~\ref{thrm:switching}. 

We need only to show that this new pair of graphs satisfies the original switching construction and the hypotheses of Theorem~\ref{thrm:switching}. Certainly $GK$ contains one of the induced subgraphs in Figure~\ref{can} because $G$ does. 

To construct the partition $A,B$ of $GK$, we will extend the partition $A_G,B_G$ in a predicable way. Notice first that for any vertex $x$ in the $G$ portion of $GK$ and for any vertex $w \in \{g_1,g_2,h_1,h_2\}$, 
\[d_{xw}^{(GK)}=d_{xw}^{(G)}\] 
by the construction of the graph.  Therefore, if $x \in A_G$, it follows that $x \in A$ for $GK$, and similarly for $x \in B_G$. 

We now aim to partition the vertices in the $K$ portion of $GK$. Suppose $w$ is such a vertex. For any vertex $x$ in the $G$ portion of $GK$, by Section~\ref{ssec:GI}, we know 
\[d_{wx}^{(GK)} = d_{wv}^{(K)}+d_{ux}^{(G)}.\]
This implies 
\begin{align*}
d_{wg_1}^{(GK)}+d_{wg_2}^{(GK)}-d_{wh_1}^{(GK)}-d_{wh_2}^{(GK)} &=d_{wv}^{(K)}+d_{ug_1}^{(G)}+d_{wv}^{(K)}+d_{ug_2}^{(G)} -d_{wv}^{(K)}-d_{uh_1}^{(G)}-d_{wv}^{(K)}-d_{uh_2}^{(G)} \\ &= d_{ug_1}^{(G)}+d_{ug_2}^{(G)}-d_{uh_1}^{(G)}-d_{uh_2}^{(G)}
\end{align*}
which is either $-2$ or $0$, depending on if $u$ is in $A_G$ or in $B_G$. Thus the necessary partition holds for all vertices in $GK$. Specifically, if $u \in A_G$, all vertices in the $K$ portion of $GK$ are in $A$. Similarly, if $u \in B_G$, all vertices in the $K$ portion of $GK$ are in $B$.

Certainly the graph $HK$ is the graph which is formed by the switching construction on $GK$. We now must prove $HK$ meets the hypotheses of the theorem. We start by showing that for any two vertices $x,y$ such that neither is in $\{g_1,g_2,h_1,h_2\}$, $d_{xy}^{(GK)}=d_{xy}^{(HK)}$.  Suppose $x,y$ are two such vertices. First we consider if both are in the $K$ portion of $HK$.  Then, by the construction of the graph \[d_{xy}^{(GK)}=d_{xy}^{(K)}=d_{xy}^{(HK)}.\] 
If both are in the $H$ portion of $HK$, then because $H$ met the hypotheses of the theorem applied to the pair $G,H$, we know $d_{xy}^{(GK)}=d_{xy}^{(HK)}$.  Now consider if $x\in H$ and $y \in K$.  We again use the fact that any path between vertices in $H$ and $K$ must pass through the identified vertex, and we can write 
\[d_{xy}^{(HK)}=d_{xu}^{(H)}+d_{vy}^{(K)}\] and we have two instances of the previous cases, where both vertices are in $H$ and $K$.   

We now need to consider the distances between $\{g_1,g_2,h_1,h_2\}$ and the remaining vertices in the graph. Suppose $w \in A$. If $w$ is in the $H$ portion of $HK$, then because $H$ meets the conditions of Theorem~\ref{thrm:switching}, 
\[d_{wg_i}^{(HK)}=d_{wg_i}^{(GK)}+ 1 \: \text{ and } \: d_{wh_i}^{(HK)}=d_{wh_i}^{(GK)}- 1 \]
for $i \in \{1,2\}$. 

If $w$ is in the $K$ portion of $HK$, then we notice that by the extension of $A_G$ and $B_G$ into $A$ and $B$, we know $u \in A_G$.  
This means 
\[d_{ug_i}^{(H)}=d_{ug_i}^{(G)}+ 1 \: \text{ and } \: d_{uh_i}^{(H)}=d_{uh_i}^{(G)}- 1 \]
for $i \in \{1,2\}$. 

We can therefore write 
\[ d_{wg_i}^{(HK)}=d_{wv}^{(K)}+d_{ug_i}^{(H)}=d_{wv}^{(K)}+d_{ug_i}^{(G)}+ 1=d_{wg_i}^{(GK)}+1\]
and
\[ d_{wh_i}^{(HK)}=d_{wv}^{(K)}+d_{uh_i}^{(H)}=d_{wv}^{(K)}+d_{ug_i}^{(G)}- 1=d_{wg_i}^{(GK)}-1\]
for $i \in \{1,2\}$.  

If $w \in B$, we follow a parallel argument and use the fact that $u$ must be in $B_G$. Therefore $GK,HK$ meet the conditions of Theorem~\ref{thrm:switching}, and $GK$ and $HK$ are distance cospectral. 
\end{proof}

\section{Conclusion}
\label{sec:conc}

We have established two constructions for distance cospectral pairs (and indeed, large distance cospectral families), including one where graphs have differing numbers of edges. It is interesting to note that distance cospectral graphs with differing numbers of edges are rare. Other than the graphs show in Section~\ref{sec:diff}, there are only two distance cospectral pairs on ten vertices or fewer. These are shown in Figure~\ref{fig:others}. 

\begin{figure}[htb]
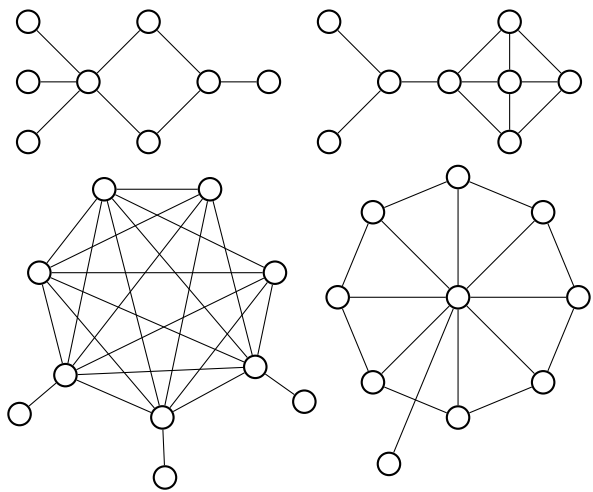

\centering

\othereights

\vspace{3pt} 

\othertens
\caption{Distance cospectral graph pairs with differing numbers of edges.}
\label{fig:others}
\end{figure}

This emphasis on the edge count fits in a larger question of what the spectrum of any matrix can tell about the graph's structure. For well studied matrices, the questions of whether cospectral pairs exist with differing number of components or whether pairs exist where one graph is bipartite and one is not have been answered. Only one of these questions is relevant for the distance matrix, since the distance matrix is not defined for disconnected graphs. It would be interesting to know if distance cospectral pairs exist where one graph is bipartite and the other is not; no such pair exists on ten vertices or fewer. We hope to see exploration of this problem and more work for distance cospectral constructions in the future.

\end{document}